\newtheorem{theorem}{Theorem}
\newtheorem{problem}{Problem}
\newtheorem{lemma}[theorem]{Lemma}
\newtheorem{remark}{Remark}
\newenvironment{proof}{{\bf Proof.}}{\hfill$\square$}
\newenvironment{proofof}[1]{{\bf Proof #1.}}{\hfill$\square$}
\newcommand{\dual}[2]{\left<#1,#2\right>}
\newcommand{\I}{\mathrm I}
\newcommand{\foralls}{\forall\,}
\newcommand{\diff}{\kappa}
\newcommand{\lm}{\lambda}
\newcommand{\supp}{\mathrm{supp}\,}
\newcommand{\Cf}{\mathrm{Cf}\,}
\newcommand{\RR}{\mathbb{R}}
\newcommand\Strut{\rule{0pt}{2.6ex}\rule[-0.9ex]{0pt}{0pt}}
\title{A robust and time-parallel preconditioner for parabolic reconstruction problems using Isogeometric Analysis}
\author{
Kent-Andre Mardal,\footnote{kent-and@simula.no, Department of Mathematics, University of Oslo, Postboks 1053, Blindern, Oslo 0316, Norway;
KAM was supported by the
 Research Council of Norway
through the grants 300305 and 301013.
}
{}
Jarle Sogn,\footnote{jarle.sogn@gmail.com, Department of Mathematics, University of Oslo, Postboks 1053, Blindern, Oslo 0316, Norway}
{}
and Stefan Takacs\footnote{stefan.takacs@numa.uni-linz.ac.at, Institute of Numerical Mathematics, Johannes Kepler University Linz, Altenberger Str. 69, 4040 Linz, Austria; this work was supported by the Austrian Science Fund (FWF): 10.55776/P33956
}
}
\date{July 2024}
\begin{document}

\maketitle

\begin{abstract}
    We consider a PDE-constrained optimization problem of tracking type with parabolic state equation.
    The solution to the problem is characterized by the Karush–Kuhn–Tucker (KKT) system, which we formulate using a strong variational formulation of the state equation and a super weak formulation of the adjoined state equation. This allows us to propose a preconditioner that is robust both in the regularization and the diffusion parameter. In order to discretize the problem, we use Isogeometric Analysis since it allows the construction of sufficiently smooth basis functions effortlessly.
    To realize the preconditioner, one has to solve a problem over the whole space time cylinder that is elliptic with respect to certain non-standard norms.
    Using a fast diagonalization approach in time, we reformulate the problem as a collection of elliptic problems in space only. These problems are not only smaller, but our approach also allows to solve them in a time-parallel way.
    We show the efficiency of the preconditioner by rigorous analysis and illustrate it with numerical experiments.
\end{abstract}

\section{Introduction}
In this paper, we consider the following PDE-constrained optimization problem of tracking type with a parabolic state equation. Given a bounded domain $\Omega \subset \mathbb R^d$, $d=2,3$, with Lipschitz boundary $\partial \Omega$, the space-time cylinder $Q_T = (0,T]\times \Omega$ with time horizon $T>0$, an observation domain $q_t \subseteq Q_T$ and a desired or observed state $y_d$, a diffusion parameter $\kappa>0$, and a regularization parameter $\alpha>0$,
find the state $y$ and the control $u$ such that
\begin{equation}
\label{problem}
\min_{y,u}
\frac 12 \|y-y_d\|_{L^2(q_t)}^2
+ \frac{\alpha}{2} \|u\|_{L^2(Q_T)}^2
\ \mbox{subject to} \ \frac{\partial y}{\partial t} - \kappa \Delta y = u
\end{equation}
with suitable boundary and initial conditions.
Our main motivation for this problem class is the reconstruction of parabolic evolution processes based on images or measurements $y_d$; this might be usually a distributed observation on parts of the time interval ($q_t = \mathcal T_{\mathcal O} \times \Omega$ for some $\mathcal T_{\mathcal O} \subset (0,T]$) or a limited observation of the whole time interval ($q_t = (0,T] \times \Omega_{\mathcal O}$ for some $\Omega_{\mathcal O} \subset \Omega$). Alternatively, we might be interested in the case that the observation domain only consists of discrete points in time or only on the boundary of $\Omega$, which will be addressed as well. Applications of such problems are for example medical imaging such as the so-called glymphatic magnetic resonance imaging (gMRI) where MRI contrast is imaged at multiple time points to show the fluid flow and clearance throughout the brain~\cite{ringstad2018brain,valnes2020apparent,vinje2023human}.
We are interested in a preconditioner that this robust in the model parameters (like $\alpha$ and $\kappa$), the length of the time steps and the grid sizes.

The solution to the problem is characterized by the corresponding optimality or Karush–Kuhn–Tucker (KKT) system. After a discretization using the Galerkin method, one obtains a problem for the whole space-time cylinder, so a three or four dimensional problem. For the solution of such a problem, iterative solvers with appropriate preconditioners are warranted.
We are interested in preconditioners that are robust in the regularization parameter.
Most of the earlier approaches to preconditioners that are provably robust in the regularization parameter are restricted to the case of full observation, i.e., $q_t=Q_T$ in our notation. Such robustness was archived for elliptic problems in \cite{SchZul07,zulehner2011nonstandard,PeaWat12} and others, later extended to parabolic problems in \cite{kolmbauer2012robust, liang2021robust,pearson2012regularization} and others. For these approaches, a direct extension to the case of limited observation is not known.

In \cite{MarNieNor17}, it was shown that robustness in the regularization parameter could be guaranteed also for the case of limited observation if the state equation is represented by a strong formulation and the adjoined state equation is formulated in a super weak formulation. Considering the Poisson equation, this means that the state variable should be two times (weakly) differentiable, while no differentiability requirements are formulated for the control and the adjoined state. A conforming discretization to such a problem needs to be $C^1$-smooth. In the context of Finite Element Methods with triangular meshes, this can be archived, e.g. using the Argyris element \cite{argyris} in two dimensions, leading to 21 degrees of freedom per element. In Isogeometric Analysis, cf.~\cite{HugCotBaz05,VeiBufSanVaz14}, the construction of arbitrarily smooth basis functions can be done effortlessly. The authors have worked out the details in \cite{mardal2022robust}, where they have shown that the approach also works for the convection-diffusion-reaction equation, even in the convection dominated case.
Both, in \cite{MarNieNor17, mardal2022robust}, the theory was developed from the continuous perspective and its discretization was subsequently addressed.

That approach can be extended to the parabolic problem considered in this paper. Here, the state variable needs to be two times weakly differentiable in space and once weakly differentiable in time. Starting from the continuous formulation, we formulate the optimality system with strong variational formulation of the state equation and super weak formulation of the adjoined state equation and discuss the setup of a robust preconditioner for the continuous formulation. That preconditioner can be interpreted as an operator preconditioner; for its analysis, we need to show an inf-sup condition, specifically a discrete inf-sup condition. For the analysis, we verify the corresponding discrete inf-sup condition. Unlike our previous article \cite{mardal2022robust}, where we stated the discrete inf-sup condition only for trivial geometries, we now show that the discrete inf-sup condition holds for all (sufficiently smooth) geometries, provided that the grid is fine enough.

In order to realize the application of preconditioner, an elliptic problem has to be solved over the whole space-time cylinder. Since the observation domain $q_t$ is often either distributed in space or distributed in time, the matrix representing the preconditioner has a Kronecker-product structure. So, it can be realized by applying the fast diagonalization technique, cf. \cite{sangalli2016isogeometric}, to the time variable. Instead of one elliptic problem over the whole space-time cylinder, one has to solve several elliptic problems in space only. Specifically, the number of these problems is as large as the number of time steps. Since these elliptic problems are independent of each other, they could be solved using in a time-parallel or, maybe more accurately, in a frequency-parallel way.
As such the solver is similar to the various parallel in
time schemes for solvers of
parabolic forward problems that have been proposed, such as
for example~\cite{farrell2021irksome,   leveque2023fast,loli2020efficient, mardal2007order, staff2005stability},
and is easy to implement in
a block Jacobi or domain-decomposition framework in time.

This paper is organized as follows. In Section~\ref{sec:problem}, we state the model problem concisely, discuss the proper function spaces and show the well-posedness of the problem on the continuous level. The discretization of the problem and the optimality of the preconditioner are shown in Section~\ref{sec:iga}; some of the auxiliary results for that analysis are to be found in the Appendix. The application of fast diagonalization in time is then discussed in Section~\ref{sec:fd}. In Section~\ref{sec:numeric}, we give numerical experiments illustrating our results. We close with conclusions in Section~\ref{sec:conclusions}.

\section{Problem formulation and preconditioner on continuous level}
\label{sec:problem}

In this section, we give a concise definition of the model problem. Let $\Omega\subset{\mathbb{R}^d}$ be a bounded open domain with a Lipschitz boundary $\partial \Omega$ and let $Q_T:= (0,T)\times\Omega $ be the space-time cylinder, where $T>0$ is the end of the considered time interval and $q_t\subseteq Q_T$ be an observation domain. The optimization problem reads as follows; the precise regularity assumptions on data and solutions are stated below.
\begin{problem}\label{prob1}
Given
a desired state $y_d$,
an initial state $y_0$,
boundary data $g$,
a source $f$,
and parameters $\kappa , \alpha > 0$, minimize
\begin{equation}
\label{eq:costFunc}
 J(y,u) = \frac12 \|y-y_d\|_{L^2(q_t)}^2 + \frac{\alpha}{2} \|u\|^2_{L^2(Q_T)}
\end{equation}
subject to the second order parabolic equation
\begin{align}
\label{eq:PDEStrong}
\begin{aligned}
  \partial_t y(t,x) - \diff\Delta y(t,x) &= f(t,x) - u(t,x)
  &&\quad \text{in}&&\quad (0,T]\times \Omega,\\
  y(t,x) &= g(t,x)
  &&\quad \text{in}&&\quad (0,T] \times \partial\Omega,\\
  y(0,x) &= y_0(x)
  &&\quad \text{in}&&\quad\Omega,
  \end{aligned}
\end{align}
where $\partial_t y:= \frac{\partial y}{\partial t}$ is the partial derivative with respect to time and $\Delta$ is the Laplace operator with respect to space.
\end{problem}

We assume that the initial and boundary conditions are \emph{compatible,} i.e., that there is a sufficiently smooth function $Q_T \to \RR$ that satisfies both conditions. Since this allows homogenization, we assume without loss of generality that $y_0=0$ and $g=0$.

We write the constraint in a strong variational form. For this purpose, we introduce the following function spaces and notation. $L^2(\Omega)$ denotes the standard Lebesgue space of square-integrable functions $\Omega\to \RR$. Analogously, $L^2( (0,T); V)$ denotes the Bochner space of square-integrable functions, mapping from the time interval into the Hilbert space $V$. $H^m(\Omega)$ and $H^m( (0,T); V)$ denote the Sobolev spaces of $m$ times weakly differentiable functions $\Omega\to \RR$ and $(0,T)\to V$, respectively. $H^1_0(\Omega) \subset H^1(\Omega)$ is the subspace of functions vanishing on the boundary, $H^1_{0,*}((0,T);V) \subset H^1((0,T);V)$ is the subspace of functions vanishing for $t=0$, but not necessarily for $t=T$. These function spaces are equipped with the standard norms $\|\cdot\|_{L^2}$ and $\|\cdot\|_{H^1}$. Moreover, let
\[
    H(\Delta, \Omega) :=\lbrace v\in L^2(\Omega) : \Delta v\in L^2(\Omega) \rbrace
\]
be the space of functions whose weak Laplacian is in $L^2(\Omega)$, whose norm is given by $\|v\|_{H(\Delta, \Omega)}^2 := \|v\|_{L^2(\Omega)} ^2+ \|\Delta v\|_{L^2(\Omega)}^2$. For a Hilbert space $H$, we use the apostrophe to denote its dual space $H'$.

As function space to formulate the state equation in a strong variational formulation, we choose
\[
        Y :=
        L^2((0,T);H(\Delta, \Omega)\cap H^1_0(\Omega))
        \cap
        H^1_{0,*}((0,T); L^2(\Omega)),
\]
which is a complete space with respect to the norm
\[
        \|y\|_{Y}^2 := \|\partial_t y - \kappa \Delta y \|_{L^2(Q_T)} ^2,
\]
cf. \cite[Chapter 3, Theorem 2.1]{ladyzhenskaya2013boundary}.
We search for the state $y$ in the function space $Y$ and for the control $u$ in $U:=L^2(Q_T)$. The constraint equation is multiplied with a test function $\lm \in U$ and we integrate over the space-time domain.
We obtain the following variational formulation
\begin{equation*}
    (\partial_t y-\diff\Delta y, \lm)_{L^2(Q_T)}  = (f-u,\lm)_{L^2(Q_T)}  \quad \foralls \lm \in U.
\end{equation*}
The Lagrangian functional associated to this problem is
\[
\mathcal{L}(y,u,\lm) := \frac{1}{2}\|y-y_d\|^2_{L^2(q_t)} + \frac{\alpha}{2} \|u\|^2_{L^2(Q_T)} +(\partial_t y-\diff\Delta y-f+u,\lm)_{L^2(Q_T)}.
\]
The solution to Problem~\ref{prob1} is a critical point of the Lagrangian functional, which is characterized by the Karush-Kuhn-Tucker system, so it is equivalent to the following problem.
\begin{problem}
\label{prob:variational}
    Find $(y,u,\lambda)\in Y\times U\times U$ such that
    \begin{align*}
       (y,\tilde{y})_{L^2(q_t)}+ (\lm,\partial_t \tilde{y}-\diff\Delta \tilde{y})_{L^2(Q_T)} &=(y_d,\tilde{y})_{L^2(q_t)}  \quad &\foralls \tilde{y} \in Y,\\
   \alpha(u,\tilde{u})_{L^2(Q_T)} + (\lm,\tilde{u})_{L^2(Q_T)} &= 0 \quad  &\foralls \tilde{u} \in U, \\
   (\partial_t y-\diff\Delta y,\tilde{\lm})_{L^2(Q_T)} +  (u,\tilde{\lm})_{L^2(Q_T)}  &= (f,\tilde{\lm})_{L^2(Q_T)} \quad  &\foralls \tilde{\lm} \in U.
    \end{align*}
\end{problem}
We write Problem~\ref{prob:variational} in operator notation as $\mathcal{A}\mathbf{x} = \mathbf{b}$, where
\begin{equation}
\label{eq:3by3}
\mathcal{A} = \begin{pmatrix}
  M_{q_t} & 0 & L'\\
  0& \alpha M & M\\
  L& M & 0
\end{pmatrix},\quad \mathbf{x}=
\begin{pmatrix}
y\\ u\\ \lm
\end{pmatrix},\quad \mathbf{b}=
\begin{pmatrix}
M_{q_t}y_d\\ 0\\ M f
\end{pmatrix}.
\end{equation}
Here, $M: U\rightarrow U'$
and $M_{q_t}: U\rightarrow U'$
represent the $L^2$-inner products
\[
    \dual{Mu}{\lm} = (u,\lm)_{L^2(Q_T)}
        \quad\mbox{and}\quad
    \dual{M_{q_t} u}{\lm} = (u,\lm)_{L^2(q_T)}
\]
and $L: Y\rightarrow U'$ represents the differential operator
\[
        \dual{Ly}{\lm} = (\partial_t y-\diff\Delta y,\lm)_{L^2(Q_T)}.
\]
By reordering the rows and columns in~\eqref{eq:3by3}, one obtains the double saddle point operator $\mathcal A$, for which the Schur complement preconditioners $\mathcal B_0$ from~\cite{MarNieNor17,SogZul18} are available, which read as follows
\begin{equation}\label{eq:schur:reordered}
\tilde A=
\begin{pmatrix}
  \alpha M & M&0\\
  M & 0&L\\
  0 & L'&M_{q_t}
\end{pmatrix}
\quad\mbox{and}\quad
\widetilde{\mathcal{B}}_0 :=
\begin{pmatrix}
  \alpha M & 0 & 0\\
  0 & \frac{1}{\alpha}M & 0\\
  0 & 0 & M_{q_t} + \alpha L'M^{-1}L
\end{pmatrix}.
\end{equation}
The preconditioner for $\mathcal A$ as in~\eqref{eq:3by3} is obtained by reordering again, which yields
\begin{equation*}
\mathcal{B}_0 :=
    \begin{pmatrix}
  M_{q_t} + \alpha L'M^{-1}L & 0 & 0\\
  0& \alpha M & 0&\\
  0& 0 & \frac{1}{\alpha}M
\end{pmatrix}.
\end{equation*}
By examining the operator $L' M^{-1}L$, we see that
\begin{equation}\label{eq:as:norm}
\begin{aligned}
    \dual{L' M^{-1}L y}{y} &= \sup_{\lm\in U}\frac{\dual{L y}{\lm}^2}{\|\lm\|^2_{L^2(Q_T)}}
    =\sup_{\lm\in U}\frac{(\partial_t y-\diff\Delta y,\lm)^2_{L^2(Q_T)}}{\|\lm\|^2_{L^2(Q_T)}} \\
    &= \|\partial_t y-\diff\Delta y\|^2_{L^2(Q_T)} = \|y\|_Y^2.
\end{aligned}
\end{equation}
The third equality follows from the Cauchy-Schwarz inequality and
\begin{equation}\label{eq:inclusion}
        (\partial_t - \kappa \Delta ) Y \subseteq U,
\end{equation}
which means that $\lambda:=\partial_t y-\Delta y \in U$ for all $y\in Y$.

\begin{lemma}\label{lem:2}
    Problem~\ref{prob:variational} is uniquely solvable; moreover,
    the condition number of the preconditioned system $\mathcal{B}_0^{-1}\mathcal{A}$ is bounded as follows
    \[
        \kappa \left(\mathcal{B}_0^{-1}\mathcal{A}\right) \leq \frac{\cos(\pi/7)}{\sin(\pi/14)} \approx 4.05.
    \]
\end{lemma}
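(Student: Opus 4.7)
\emph{Plan.} I would prove both statements as a single application of the abstract double-saddle-point framework of~\cite{SogZul18}, which yields the specific constant $\cos(\pi/7)/\sin(\pi/14)$ for a three-field saddle-point operator preconditioned by the block-diagonal operator built from the successive Schur complements. Unique solvability of Problem~\ref{prob:variational} is then an automatic by-product, since the preconditioned operator is shown to be an isomorphism. It is convenient to work with the reordered pair $(\tilde A,\tilde{\mathcal B}_0)$ of~\eqref{eq:schur:reordered}; the relation to $(\mathcal A,\mathcal B_0)$ is a mere permutation of unknowns and hence preserves the spectrum.

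The first step is to verify that the three diagonal blocks of $\tilde{\mathcal B}_0$ coincide, up to sign, with the successive Schur complements of $\tilde A$. Block elimination produces $\alpha M$ in slot $(1,1)$, then $-\tfrac{1}{\alpha}M$ in slot $(2,2)$ after eliminating the first row and column, and finally $M_{q_t}+\alpha L'M^{-1}L$ in slot $(3,3)$ after also eliminating the second row and column. The second step is to check that each of these defines an inner product equivalent to the topology of the corresponding factor space. For $\alpha M$ and $\tfrac{1}{\alpha}M$ this is trivial because $M$ is the Riesz map of $U=L^2(Q_T)$. For the third block I would invoke~\eqref{eq:as:norm}, which shows $\dual{L'M^{-1}L y}{y} = \|y\|_Y^2$, together with the continuous embedding $Y\hookrightarrow L^2(q_t)$ (a consequence of~\cite[Ch.~3]{ladyzhenskaya2013boundary}); these combine to give the uniform two-sided equivalence of $M_{q_t}+\alpha L'M^{-1}L$ with the $Y$-norm.

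The only really delicate ingredient in this chain is identity~\eqref{eq:as:norm}, which in turn rests on the inclusion~\eqref{eq:inclusion}, $(\partial_t-\kappa\Delta)Y\subseteq U$; this is precisely the reason for employing the strong variational formulation of the state equation and for defining $Y$ in that particular way. Once the hypotheses are checked, the theorem of~\cite{SogZul18} supplies a boundedness constant at most $\cos(\pi/7)$ and an inf-sup constant at least $\sin(\pi/14)$ for $\mathcal A$ with respect to the inner product induced by $\mathcal B_0$, from which both the well-posedness of Problem~\ref{prob:variational} and the claimed condition-number bound follow immediately.
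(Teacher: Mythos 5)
Your argument is essentially the paper's: permute $(\mathcal A,\mathcal B_0)$ to the block-tridiagonal pair in~\eqref{eq:schur:reordered} and apply the abstract double-saddle-point framework of~\cite{SogZul18}, whose Lemma~2.1 gives well-posedness and whose Corollary~2.4 delivers the constant $\cos(\pi/7)/\sin(\pi/14)$. The one point worth flagging is that the paper at this step explicitly invokes Lemma~\ref{lemma:YZeq} --- which (together with a Poincar\'e inequality in time) is really where the continuity of $M_{q_t}$ on $Y$ comes from --- whereas you attribute the embedding $Y\hookrightarrow L^2(q_t)$ directly to~\cite{ladyzhenskaya2013boundary}, which the paper cites only for the completeness of $Y$.
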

\begin{proof}
We use the results from \cite{SogZul18}. By reordering rows and columns in $\mathcal A$ and $\mathcal B_0$, we obtain the operators from~\eqref{eq:schur:reordered}, which have the form as in that paper.
Then, the well-posedness follows from \cite[Lemma 2.1]{SogZul18} by using Lemma~\ref{lemma:YZeq}. The condition number bound can be found in \cite[Corollary 2.4]{SogZul18}.
\end{proof}

\begin{remark}[2--by--2 formulation]
The 3--by--3 formulation \eqref{eq:3by3} can be reduced to a 2--by--2 formulation by eliminating the control using $u=-\tfrac1\alpha \lambda$.
The reduced system and preconditioner are
\begin{equation}
\label{eq:2by2}
\begin{pmatrix}
  M_{q_t}  & L'\\
  L& -\tfrac{1}{\alpha}M &
\end{pmatrix}
\begin{pmatrix}
y\\ \lm
\end{pmatrix}=
\begin{pmatrix}
{M}_{q_t}y_d\\  M f
\end{pmatrix}
\quad \text{and} \quad
\begin{pmatrix}
  M_{q_t} + \alpha L'M^{-1}L  & 0\\
  0& \tfrac{1}{\alpha}M &
\end{pmatrix}.
\end{equation}
Analogously to Lemma~\ref{lem:2}, we can show well-posedness also for the reduced problem. Here, the condition number bound is $\frac{ \cos ( \pi / 5 ) }{ \sin ( \pi / 10 ) } \approx 2.62$.
\end{remark}

In order to realize the preconditioner for either the 3--by--3 formulation or the 2--by--2 formulation, we have to solve linear systems of the form, given $w\in U$, find $y\in Y$ with
\[
        ( M_{q_t} + \alpha L' M^{-1} L) y = w,
\]
which warrants a closer look on the corresponding operator. The following lemma is helpful for this task.
\begin{lemma}
\label{lemma:YZeq}
We have
\[
\tfrac 12 \|y\|^2_Y  \leq  \|\partial_t y\|^2_{L^2(Q_T)} + \kappa^2 \|\Delta y\|^2_{L^2(Q_T)}  \leq \|y\|^2_Y
\quad \foralls y\in Y.
\]
\end{lemma}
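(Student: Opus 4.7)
The plan is to reduce both inequalities to a single integration-by-parts identity. Expanding the squared norm gives
\begin{equation*}
\|y\|_Y^2 = \|\partial_t y - \kappa \Delta y\|_{L^2(Q_T)}^2 = \|\partial_t y\|^2_{L^2(Q_T)} + \kappa^2 \|\Delta y\|^2_{L^2(Q_T)} - 2\kappa (\partial_t y, \Delta y)_{L^2(Q_T)},
\end{equation*}
so both inequalities hinge on controlling the mixed term $(\partial_t y, \Delta y)_{L^2(Q_T)}$. The lower bound is then elementary: Young's inequality yields $2\kappa |(\partial_t y, \Delta y)_{L^2(Q_T)}| \leq \|\partial_t y\|^2_{L^2(Q_T)} + \kappa^2 \|\Delta y\|^2_{L^2(Q_T)}$, and substituting this into the expansion above gives the claimed $\tfrac12 \|y\|_Y^2 \leq \|\partial_t y\|^2_{L^2(Q_T)} + \kappa^2 \|\Delta y\|^2_{L^2(Q_T)}$.

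For the upper bound, the task reduces to showing the sign condition $(\partial_t y, \Delta y)_{L^2(Q_T)} \leq 0$ for every $y \in Y$. Formally, I would integrate by parts in space, using that $y(t,\cdot) \in H^1_0(\Omega)$ for a.e.\ $t$ so the boundary term vanishes, recognise a total time derivative, and then integrate in time using the initial condition $y(0,\cdot)=0$ built into $H^1_{0,*}((0,T);L^2(\Omega))$:
\begin{equation*}
(\partial_t y, \Delta y)_{L^2(Q_T)} = -\int_0^T \int_\Omega \partial_t \nabla y \cdot \nabla y \,dx\,dt = -\tfrac12 \|\nabla y(T,\cdot)\|^2_{L^2(\Omega)} \leq 0.
\end{equation*}

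The main obstacle is justifying this formal computation on $Y$ itself, since at that level of regularity the exchange of $\partial_t$ with $\nabla$ and the interpretation of the trace $\nabla y(T,\cdot)$ as an $L^2(\Omega)$-function are not immediate. I would handle this by a density argument: approximate $y \in Y$ by smooth functions satisfying the homogeneous parabolic boundary and initial conditions, apply the identity to the approximants, and pass to the limit, using that all terms in the expansion are continuous with respect to the $Y$-norm. Alternatively, one may appeal to the parabolic regularity theorem already cited to define $Y$ (\cite[Ch.~3, Thm.~2.1]{ladyzhenskaya2013boundary}), which embeds $Y$ continuously into $C([0,T];H^1_0(\Omega))$ and thus endows $\nabla y(T,\cdot)$ with an intrinsic meaning as an element of $L^2(\Omega)^d$, allowing the integration by parts to be carried out directly.
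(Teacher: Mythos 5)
Your proof is correct and follows essentially the same route as the paper: expand $\|y\|_Y^2 = \|\partial_t y\|^2 + \kappa^2\|\Delta y\|^2 - 2\kappa(\partial_t y,\Delta y)$, obtain the lower bound via Young's inequality, and obtain the upper bound by showing $(\partial_t y,\Delta y)_{L^2(Q_T)} \le 0$ through spatial integration by parts and the fundamental theorem of calculus with $y(0)=0$. Your additional remarks about rigorously justifying the integration by parts (by density or via the embedding of $Y$ into $C([0,T];H^1_0(\Omega))$) go slightly beyond the paper's proof, which carries out the calculation without commenting on this point.
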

\begin{proof}
    Using the triangle inequality and Young's inequality, we obtain
    \begin{align*}
        \|y\|_{Y}^2 =
         \|\partial_t y - \kappa \Delta y \|^2_{L^2(Q_T)} &\leq 2\|\partial_t y\|^2_{L^2(Q_T)} + 2\kappa^2 \|\Delta y\|^2_{L^2(Q_T)},
    \end{align*}
    which shows the first inequality. Using the binomial expansion and integration by parts in space, the fundamental theorem of calculus, and $y(0)=0$, we obtain
    \begin{align*}
        &\|y\|^2_{Y} - \|\partial_t y\|^2_{L^2(Q_T)} - \kappa^2 \|\Delta y\|^2_{L^2(Q_T)}
         =         2\kappa (\partial_t y, - \Delta y)_{L^2(Q_T)}\\
        & \quad =  2\kappa(\nabla\partial_t y,  \nabla y)_{L^2(Q_T)}
        = \tfrac{\mathrm d}{\mathrm dt} \kappa (\nabla y,  \nabla y)_{L^2(Q_T)}
        = \kappa \|\nabla y(T) \|^2_{L^2(\Omega)} \ge 0,
    \end{align*}
    i.e., the second inequality.
\end{proof}

Let $B,C:Y\to Y'$ be operators representing the biharmonic problem in space and the harmonic problem in time, respectively, i.e.,
\[
        \dual{B y}{\tilde y} := (\Delta y,\Delta \tilde{y})_{L^2(Q_T)},
        \qquad
        \dual{C y}{\tilde y} := (\partial_t y,\partial_t \tilde{y})_{L^2(Q_T)},
\]
Using~\eqref{eq:as:norm} and Lemma~\ref{lemma:YZeq}, we immediately obtain that $\mathcal B_0$ is spectrally equivalent to
\begin{equation*}
\mathcal{B} :=
    \begin{pmatrix}
  M_{q_t} + \alpha \kappa^2 B + \alpha C  & 0 & 0\\
  0& \alpha M & 0&\\
  0& 0 & \frac{1}{\alpha}M
\end{pmatrix}.
\end{equation*}
So, using Lemma~\ref{lem:2}, we obtain further as follows.
\begin{lemma}\label{lem:3}
    The condition number of the preconditioned system $\mathcal{B}^{-1}\mathcal{A}$ is bounded as follows
    \[
        \kappa \left(\mathcal{B}^{-1}\mathcal{A}\right) \leq 2\frac{\cos(\pi/7)}{\sin(\pi/14)} \approx 8.10.
    \]
\end{lemma}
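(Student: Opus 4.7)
The plan is to obtain the bound by combining Lemma~\ref{lem:2} with the spectral equivalence of $\mathcal B$ and $\mathcal B_0$ that was asserted immediately before the lemma, and to track the equivalence constants carefully so that they contribute exactly a factor of~$2$.

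First I would note that $\mathcal B$ and $\mathcal B_0$ agree in their second and third diagonal blocks, so it suffices to compare their $(1,1)$ blocks. Using~\eqref{eq:as:norm}, the quadratic form of the $(1,1)$ block of $\mathcal B_0$ equals $\|y\|_{L^2(q_t)}^2 + \alpha \|y\|_Y^2$, while the quadratic form of the $(1,1)$ block of $\mathcal B$ equals $\|y\|_{L^2(q_t)}^2 + \alpha \bigl(\|\partial_t y\|_{L^2(Q_T)}^2 + \kappa^2 \|\Delta y\|_{L^2(Q_T)}^2\bigr)$. Lemma~\ref{lemma:YZeq} then yields $\tfrac 12 \|y\|_Y^2 \le \|\partial_t y\|_{L^2(Q_T)}^2 + \kappa^2 \|\Delta y\|_{L^2(Q_T)}^2 \le \|y\|_Y^2$, and since $\tfrac12 \le 1$, adding the (identical) $\|y\|_{L^2(q_t)}^2$ term on both sides preserves the inequalities. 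So the $(1,1)$ blocks are equivalent with constants $\tfrac 12$ and $1$, which lifts blockwise to $\tfrac 12 \mathcal B_0 \le \mathcal B \le \mathcal B_0$ as symmetric positive definite operators.

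Next I would transfer this to the condition number. Since $\mathcal B$ and $\mathcal B_0$ induce equivalent inner products with ratio at most~$2$, the $\mathcal B$-norm and $\mathcal B_0$-norm of any element differ by a factor of at most $\sqrt{2}$, and dually the same holds for $\mathcal B^{-1}$ and $\mathcal B_0^{-1}$. Thus for the symmetric saddle point operator $\mathcal A$,
\begin{equation*}
\kappa(\mathcal B^{-1}\mathcal A) \;\le\; \frac{1}{\tfrac 12}\,\kappa(\mathcal B_0^{-1}\mathcal A) \;=\; 2\,\kappa(\mathcal B_0^{-1}\mathcal A),
\end{equation*}
and inserting the bound from Lemma~\ref{lem:2} yields the claimed estimate $2\cos(\pi/7)/\sin(\pi/14)$.

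There is no real obstacle in this argument; the only thing to be careful about is to make sure that the blockwise spectral equivalence of the $(1,1)$ block, which involves the mixed sum $\|y\|_{L^2(q_t)}^2 + \alpha(\cdots)$, does indeed give constants $\tfrac12$ and~$1$ (rather than something worse), and that the passage from operator equivalence $\tfrac12\mathcal B_0 \le \mathcal B \le \mathcal B_0$ to the condition number of the preconditioned indefinite system $\mathcal B^{-1}\mathcal A$ loses at most the factor $c_2/c_1 = 2$; both are standard but worth stating explicitly.
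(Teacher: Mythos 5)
Your proposal is correct and takes essentially the same approach as the paper: the paper simply notes (in the sentence preceding Lemma~\ref{lem:3}) that \eqref{eq:as:norm} and Lemma~\ref{lemma:YZeq} give spectral equivalence of $\mathcal B_0$ and $\mathcal B$ with constants $\tfrac12$ and $1$, and then invokes Lemma~\ref{lem:2}; you have merely spelled out the blockwise comparison of the $(1,1)$ blocks and the standard transfer of a two-sided spectral bound $\tfrac12 \mathcal B_0 \le \mathcal B \le \mathcal B_0$ to the factor $c_2/c_1 = 2$ on the condition number, which the paper leaves implicit.
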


It is worth noting that we did not assume any tensor-product structure of the observation domain $q_t$. Before we discuss the discretization, we want to remark on possible extensions.
\begin{remark}[Neumann and Robin conditions]\label{rem:neu}
        The approach discussed in this paper can be extended to Neumann and Robin boundary conditions.
        Since we consider a strong formulation of the state equation, the (homogenized) Neumann or Robin conditions have to be incorporated in the space $Y$ as well. This is no problem since Neumann traces are well-defined on $H(\Delta,\Omega)$.
        Specifically, $\gamma_n : H(\Delta,\Omega) \to H^{-1/2}(\partial\Omega)$ with $\langle \gamma_n v, w\rangle = (\Delta v,w)_{L^2(\Omega)}+(\nabla v,\nabla w)_{L^2(\Omega)}$ defines a Neumann trace, i.e., it satisfies
        \[
                \gamma_n w = \frac{\partial w}{\partial n}\big|_{\partial \Omega}
                \quad \forall w \in C^\infty (\Omega).
        \]
\end{remark}
The analysis is not limited to $L^2$ tracking functionals. Indeed, we can replace $\tfrac12 \|y-y_d\|_{L^2(q_t)}^2$ by $\tfrac12 \dual{ M_{q_t} (y-y_d)}{y-y_d}$ for any linear and continuous operator $M_{q_t}:\tilde Y\to \tilde Y'$, where $\tilde Y$ is a Hilbert space with $Y\subseteq \tilde Y$ and $y_d \in \tilde Y$. This allows the extension to the following cases.
\begin{remark}[Boundary observation]\label{rem:bo}
        The analysis also holds for the tracking functional
        \[
            \hat J(y,u):=
            \frac 12 \|y - y_d\|_{L^2((0,T)\times \partial \Omega)}^2
            + \frac \alpha2 \|u\|_{L^2(Q_T)}^2
        \]
        for given $y_d \in L^2((0,T)\times\partial \Omega)$. Certainly, this is only of interest if Neumann or Robin conditions are prescribed (Remark~\ref{rem:neu}).
        $M_{q_t}$ is continuous on $\hat Y:=L^2((0,T)\times \partial \Omega)$, which is a subspace of $Y$ due to standard trace theorems for $H^1(\Omega)$:
        \[
            \|y\|_{\tilde Y}^2 = \|y\|_{L^2((0,T)\times \partial \Omega)}^2
            = \|y\|_{L^2((0,T);L^2(\partial \Omega))}^2
            \le c \|y\|_{L^2((0,T),H^1(\Omega))}^2\le c \|y\|_Y^2.
        \]
\end{remark}
\begin{remark}[Observation on discrete points in time]\label{rem:ti}
        The analysis also holds for the tracking functional
        \[
            \tilde J(y,u):=
            \frac 12 \sum_{i=1}^N \|y(t_i) - y_d(t_i)\|_{L^2(\Omega)}^2
            + \frac \alpha2 \|u\|_{L^2(Q_T)}^2
        \]
        for given $t_i \in [0,T]$ and $y_d(t_i) \in L^2(\Omega)$ for $i=1,\ldots,N$.
%
        $M_{q_t}$ is continuous on the Bochner space  $\hat Y:=H^1((0,T),L^2(\Omega))$, which is a subspace of $Y$ due to standard trace theorems:
        \[
            \dual{M_{q_t} y}{y} = \sum_{i=1}^N \|y(t_i) \|_{L^2(\Omega)}^2
            \le c \|y\|_{\tilde Y}^2 = \|y\|_{H^1((0,T),L^2(\Omega))}^2\le c \|y\|_Y^2.
        \]
\end{remark}

\section{Discretization with Isogeometric Analysis}
\label{sec:iga}
We discretize the optimality system with a space-time isogeometric discretization, which we introduce briefly. For more information on Isogeometric Analysis, we refer to the literature, like~\cite{HugCotBaz05,HugCotBaz09} and references therein. We assume that the domain $\Omega\subset \RR^d$ is parameterized by a geometry function
\begin{equation}
\mathbf{G}: \widehat \Omega \rightarrow \Omega = \mathbf{G}(\widehat\Omega),
\end{equation}
defined on the parameter domain  $\widehat \Omega := (0,1)^d$. We assume that the geometry function is sufficiently regular, specifically we assume
\begin{equation}\label{eq:ass:geo}
    \mathbf G\in W^3_\infty(\widehat\Omega)
    \quad \text{and} \quad
    |\det \nabla\mathbf{G}| \ge c_{\mathbf G} >0, \quad c_{\mathbf G}\in \mathbb R,
\end{equation}
where $\nabla\mathbf{G}$ denotes the Jacobian.
To obtain a parametrization of the space-time cylinder $Q_T= (0,T)\times \Omega$, we use
\[
    \mathbf{G}_T: \widehat Q_T := (0,T) \times \widehat\Omega \rightarrow Q_T,
    \quad\text{given by}\quad
    \mathbf{G}_T(t,\widehat x) := (t, \mathbf G(\widehat x)).
\]

For discretization, we use tensor-product B-splines, which are defined on the parameter domain $\widehat Q_T$.
For any grid $Z = (z_0,z_1,z_2,\cdots, z_N)$ with $z_i<z_{i+1}$, any integers $p>k\ge0$, the associated spline space $S_{p,k}(Z)$ consists of all functions in $C^{k}(z_0,z_N)$ whose restriction to any interval $[z_i,z_{i+1}]$ between two consecutive breakpoints is a polynomial function of degree $p$. We can analogously define spline spaces for $p>k=-1$; these functions might be discontinuous, on each of the intervals $[z_0,z_1]$ and $(z_i,z_{i+1}]$, $i=1,\ldots, N-1$, they are assumed to be polynomial. The largest span between breakpoints is the grid size $h_Z:=\max_i z_i-z_{i-1}$; analogously, we define $h_{Z,\min}:=\min_i z_i-z_{i-1}$.

To obtain a function space over $\widehat \Omega$, we use tensor-product splines over some grid $\mathbf Z_x = (Z_1,\cdots,Z_d)$, which we denote by $S_{p_x,k_x,\mathbf Z_x}$. Analogously, we obtain a function space over $[0,T]$ by  introducing a grid $Z_t$ for the time domain and by utilizing the corresponding spline space $S_{p_t,k_t,Z_t}$ as defined above. To obtain a function space over the whole space-time cylinder, we again use tensor-product splines
\[
        S_{p_t,k_t,Z_t} \otimes S_{p_x,k_x,\mathbf Z_x}
\]
with parameters still to be defined.

Now, we can specify the spaces $Y_h \subset Y$ and $U_h \subset U$ that we use to discretize the problem.
By choosing a spline space of functions that are continuous in time direction and continuously differentiable in space direction, we obtain a conforming space, i.e., a subspace of $Y$. Accordingly, we choose
\begin{equation}
    \label{eq:Ydisc}
    \widehat Y_h:= \big\{ \widehat y_h :  \widehat y_h \in S_{p_t,k_t,Z_t} \otimes S_{p_x,k_x,\mathbf Z_x}, \; \widehat y_h\big|_{t=0} = 0\mbox{ and } \widehat y_h\big|_{(0,T]\times \partial \widehat \Omega} = 0 \big\}
\end{equation}
for some $p_t>k_t\ge0$ and $p_x>k_x\ge1$.
The space $Y_h \subset Y$ is then defined via the pull-back principle, i.e.,
\[
    Y_h:=\widehat Y_h \circ \mathbf G_T^{-1}.
\]
Since we will need this in the next section, we want to stress that we consider also a corresponding tensor-product basis for these spaces. So, let $[ \varphi_i]_{i=0}^{N_t-1}$ and $[\widehat \phi_j]_{j=0}^{N_x-1}$ be the bases of $S_{p_t,k_t,Z_t}$ and $S_{p_x,k_x,\mathbf Z_x}$, respectively. Then, we choose $[\widehat \psi_n]_{n=0}^{N_tN_x-1}$ with $\widehat \psi_{iN_x+j}(t,x)= \varphi_i(t)\;\widehat \phi_j(x)$ as basis for $\widehat Y_h$. According to the pull-back principle, $[\psi_n]_{n=0}^{N_tN_x-1}$ with $\psi_{iN_x+j}(t,x)=\widehat \psi_{iN_x+j}(\mathbf G_T^{-1}(t,x)) = \widehat \psi_{iN_x+j}(t, \mathbf G^{-1}(x)) = \varphi_i(t)\;\widehat \phi_j(\mathbf G^{-1}(x))$ is the basis for $Y_h$. With the choice $\phi_j(x):=\widehat \phi_j(\mathbf G^{-1}(x))$, we see that the basis functions have the following form:
\begin{equation}\label{eq:tps}
        \psi_{iN_x+j}(t,x)=\varphi_i(t)\;\phi_j(x).
\end{equation}
Concerning the choice of the function space for the control $u_h$ and the Lagrange multiplier $\lambda_h$, we follow~\eqref{eq:inclusion} and choose
\begin{equation}
\label{eq:discCon}
\widehat U_h:= S_{p_t,k_t-1,Z_t} \otimes S_{p_x,k_x-2,\mathbf Z_x},
\end{equation}
which guarantees
\begin{equation}
\label{eq:inclusion:discrete}
(\partial_t - \kappa \Delta) \widehat Y_h \subseteq \widehat U_h.
\end{equation}
The corresponding function space $U_h \subset U$ for the physical domain is again defined by the pull-back principle, i.e.,
\[
    U_h := \widehat U_h \circ \mathbf G_T^{-1}.
\]
Again, we choose tensor-product bases for $\widehat U_h$ and $U_h$.
It is worth noting that this choice does \emph{not} guarantee
$(\partial_t - \kappa \Delta) Y_h \subseteq  U_h$ for general geometries.
We discretize Problem~\ref{prob:variational} with the Galerkin principle, which leads to the linear system
\[
\mathcal{A}_h \, \underline{\mathbf{x}}_h = \underline{\mathbf{b}}_h,
\]
where
\begin{equation}
\label{eq:3by3h}
\mathcal{A}_h = \begin{pmatrix}
  M_{q_t,h} & 0 & L_h^\top\\
  0& \alpha M_h & M_h&\\
  L_h& M_h & 0
\end{pmatrix},\quad
\underline{\mathbf{x}}_h=
\begin{pmatrix}
\underline y_h\\ \underline u_h\\ \underline\lm_h
\end{pmatrix},\quad
\underline{\mathbf{b}}_h=
\begin{pmatrix}
M_{q_t,h} \underline y_{d,h}\\ 0\\ M_h \underline f_h
\end{pmatrix},
\end{equation}
$M_{q_t,h}$, $M_h$ and $L_h$ are Galerkin mass and stiffness matrices representing the operators $M_{q_t}$, $M$ and $L$, respectively. Here and in what follows, $\underline y_h$, $\underline u_h$ and $\underline\lm_h$ are the representations of the corresponding functions with respect to the chosen bases.
The vectors
$M_{q_t,h} \underline y_{d,h}$ and $M_h \underline f_h$
are obtained by evaluating the corresponding bilinear forms
$(y_d,\cdot)_{L^2(q_t)}$ and $(f,\cdot)_{L^2(Q_T)}$, respectively, for all basis functions.
Analogous to the continuous case, we choose
\begin{equation}
\label{eq:Bh}
\mathcal{B}_h = \begin{pmatrix}
  P_h  & 0 & 0\\
  0& \alpha M_h & 0\\
  0& 0 & \tfrac1\alpha M_h
\end{pmatrix}
\quad\mbox{with}\quad
P_h := M_{q_t,h} + \alpha \kappa^2 B_h + \alpha C_h
\end{equation}
as preconditioner, where $B_h$ and $C_h$ are the Galerkin matrices representing the operators $B$ and $C$, respectively.

In order to provide an upper bound for $\kappa(\mathcal B_h^{-1} \mathcal A_h)$, we use the same approach as for the continuous case: we show that $\alpha L_h^{\top} M_h^{-1} L_h$ and $\alpha \kappa B_h^2 + \alpha C_h$ are spectrally equivalent. Since $U_h\subset U$, the Cauchy-Schwarz inequality, the definitions of the involved matrices and Lemma~\ref{lemma:YZeq} immediately imply
\begin{equation}\nonumber
\begin{aligned}
    &\underline y_h^\top L_h^\top M_h^{-1} L_h \underline y_h
     = \sup_{\underline \lambda_h \in \mathbb R^{\dim U_h}} \frac{ (\underline \lambda_h^\top L_h \underline y_h)^2 }{ \|\underline \lambda_h\|_{M_h}^2}
    = \sup_{\lambda_h \in U_h} \frac{ \langle L y_h, \lambda_h \rangle^2 }{ \|\lambda_h\|_{L^2(Q_T)}^2}
    \\&\qquad\le \sup_{\lambda \in U} \frac{ \langle L y_h, \lambda \rangle^2 }{ \|\lambda\|_{L^2(Q_T)}^2}
    \le \| \partial_t y_h - \kappa \Delta y_h\|_{L^2(Q_T)}^2
    \le 2 \underline y_h^\top
        (   C_h + \kappa^2 B_h  ) \underline y_h,
\end{aligned}
\end{equation}
which shows
\begin{equation}\label{eq:discr:cont}
    L_h^\top M_h^{-1} L_h\le2(   C_h + \kappa^2 B_h  ).
\end{equation}
Next, we need to discuss the other direction. Analogous to the continuous case, we have to show inf-sup stability. Before we consider the general case, we consider a simple special case.
\begin{remark}
If $\Omega=(a_1,b_1)\times\cdots\times (a_d,b_d)$ is a box domain and $\mathbf G_T$ is the corresponding affine-linear mapping $\mathbf G_T(t,\widehat x_1,\cdots,\widehat x_d)=(t, a_1+\widehat x_1(b_1-a_1),\cdots,a_d+\widehat x_d(b_d-a_d))$, the inclusion~\eqref{eq:inclusion:discrete} obviously carries over to the physical domain, i.e., we have
$
(\partial_t - \kappa \Delta) Y_h \subseteq U_{h}.
$
Analogous to the continuous case, we have for the choice $\lambda_h:=Ly_h$ using Lemma~\ref{lemma:YZeq} that
\begin{equation}\nonumber
\begin{aligned}
    &\underline y_h^\top L_h^\top M_h^{-1} L_h \underline y_h
    = \sup_{\lambda_h \in U_h} \frac{ \langle L_h y_h, \lambda_h \rangle^2 }{ \|\lambda_h\|_{L^2(Q_T)}^2}
     = \sup_{\lambda_h \in U_h} \frac{ \langle L y_h, \lambda_h \rangle^2 }{ \|\lambda_h\|_{L^2(Q_T)}^2}
   \ge \|L y_h\|_{L^2(Q_T)}^2
     \\&\qquad= \| \partial_t y_h - \kappa \Delta y_h\|_{L^2(Q_T)}^2 \ge \underline y_h^\top
        (   C_h + \kappa^2 B_h  ) \underline y_h,
\end{aligned}
\end{equation}
which shows together with~\eqref{eq:discr:cont} the spectral equivalence of $L_h^\top M_h^{-1} L_h$ and $\alpha \kappa^2 B_h + \alpha C_h$.
Analogous to Lemma~\ref{lem:3}, we obtain
$\kappa( \mathcal{B}_h^{-1} \mathcal{A}_h ) \le 2 \frac{\cos(\pi/7)}{\cos(\pi/14)} \approx 8.10$.
\end{remark}

In the remainder of this section, we show the following theorem, which guarantees spectral equivalence if the grid is fine enough.
\begin{theorem}\label{thrm:discrinfsup}
    There is a constant $c>0$ only depending on $\mathbf G$, $p_x$, $k_x$ and $h_{\mathbf Z_x}/h_{\mathbf Z_x,\min}$ such that
    \begin{align*}
            \sup_{\lambda_h \in U_h} \frac{ \langle L_h y_h , \lambda_h\rangle^2}{ \|\lambda_h\|_{L^2(Q_T)}^2}
            \ge (1-ch_x^2)  \|\partial_t y_h - \kappa\Delta y_h\|_{L^2(Q_T)}^2
            \quad\foralls y_h\in Y_h,
    \end{align*}
    where $h_x:= h_{\mathbf Z_x}$ is the chosen grid size in space.
\end{theorem}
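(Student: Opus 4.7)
My plan is to convert the discrete inf-sup inequality into an $L^2$-approximation estimate. Let $\Pi_h : L^2(Q_T) \to U_h$ denote the $L^2$-orthogonal projection. Choosing $\lambda_h := \Pi_h(\partial_t y_h-\kappa\Delta y_h)$ and using Galerkin orthogonality, the supremum is attained and equals $\|\Pi_h(\partial_t y_h-\kappa\Delta y_h)\|_{L^2(Q_T)}^2$, so
\[
\sup_{\lambda_h\in U_h}\frac{\langle L_h y_h,\lambda_h\rangle^2}{\|\lambda_h\|_{L^2(Q_T)}^2}
= \|\partial_t y_h-\kappa\Delta y_h\|_{L^2(Q_T)}^2 - \|(I-\Pi_h)(\partial_t y_h-\kappa\Delta y_h)\|_{L^2(Q_T)}^2.
\]
The claim thus reduces, by the best-approximation property of $\Pi_h$, to exhibiting some $w_h\in U_h$ with $\|\partial_t y_h-\kappa\Delta y_h-w_h\|_{L^2(Q_T)}\le c h_x\|\partial_t y_h-\kappa\Delta y_h\|_{L^2(Q_T)}$.

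Next, since $\mathbf G_T$ acts as the identity in the time variable, one has $\partial_t y_h=(\partial_t\widehat y_h)\circ\mathbf G_T^{-1}$ with $\partial_t\widehat y_h\in S_{p_t,k_t-1,Z_t}\otimes S_{p_x,k_x,\mathbf Z_x}\subseteq\widehat U_h$, so $\partial_t y_h\in U_h$ automatically. Combined with the bound $\kappa\|\Delta y_h\|_{L^2(Q_T)}\le\|\partial_t y_h-\kappa\Delta y_h\|_{L^2(Q_T)}$ from Lemma~\ref{lemma:YZeq}, it therefore suffices to produce $v_h\in U_h$ with $\|\Delta y_h - v_h\|_{L^2(Q_T)}\le c h_x\|\Delta y_h\|_{L^2(Q_T)}$; the choice $w_h := \partial_t y_h-\kappa v_h\in U_h$ then does the job.

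To build $v_h$ I would work on the parameter domain, where the chain rule yields
\[
(\Delta y_h)\circ\mathbf G_T = \sum_{k,l=1}^d \widehat a_{kl}(\widehat x)\,\partial^2_{\widehat x_k\widehat x_l}\widehat y_h + \sum_{k=1}^d \widehat b_k(\widehat x)\,\partial_{\widehat x_k}\widehat y_h,
\]
with coefficients $\widehat a_{kl},\widehat b_k\in W^{1,\infty}(\widehat\Omega)$ by the assumption $\mathbf G\in W^{3,\infty}$. Crucially, $\partial^2_{\widehat x_k\widehat x_l}\widehat y_h$ and $\partial_{\widehat x_k}\widehat y_h$ themselves lie in $\widehat U_h$: differentiation maps $S_{p,k}$ into $S_{p-1,k-1}\subseteq S_{p,k-2}$ in each affected factor, and $S_{p_t,k_t}\subseteq S_{p_t,k_t-1}$ in time. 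So the pulled-back Laplacian is a sum of $W^{1,\infty}$-coefficients multiplying splines from $\widehat U_h$.

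The main obstacle is that a naive replacement of $\widehat a_{kl}$ by a spline approximation fails, because the pointwise product of two splines generally leaves $\widehat U_h$ (the polynomial degree doubles). I would circumvent this by a B-spline-by-B-spline \emph{coefficient freezing} argument: expand each derivative in the tensor-product B-spline basis $\{\widehat\Phi_m\}$ of $S_{p_x,k_x-2,\mathbf Z_x}$, e.g.~$\partial^2_{\widehat x_k\widehat x_l}\widehat y_h = \sum_m \beta_m(t)\widehat\Phi_m(\widehat x)$ with $\beta_m\in S_{p_t,k_t,Z_t}$, and replace each $\widehat a_{kl}(\widehat x)\widehat\Phi_m(\widehat x)$ by $\widehat a_{kl}(\widehat x_m^{\ast})\widehat\Phi_m(\widehat x)$ for some fixed $\widehat x_m^{\ast}\in\operatorname{supp}\widehat\Phi_m$. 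The resulting $\widehat v_h$ lies in $\widehat U_h$ by construction; since each B-spline support has diameter $O(h_x)$, the Bramble--Hilbert lemma controls the local error by $c h_x\|\widehat a_{kl}\|_{W^{1,\infty}}\|\widehat\Phi_m\|_{L^2}$. Summing and invoking the finite-overlap and $L^2$-stability of the B-spline basis (from which the dependence of $c$ on $p_x$, $k_x$, and the quasi-uniformity ratio $h_{\mathbf Z_x}/h_{\mathbf Z_x,\min}$ enters) yields
\[
\|(\Delta y_h)\circ\mathbf G_T-\widehat v_h\|_{L^2(\widehat Q_T)}\le c h_x\,\|\widehat y_h\|_{L^2((0,T);H^2(\widehat\Omega))}.
\]
A final elliptic-regularity step, using that $y_h$ vanishes on $\partial\Omega$ and that $\Omega=\mathbf G(\widehat\Omega)$ inherits smoothness from $\mathbf G\in W^{3,\infty}$, dominates the right-hand side by $c(\mathbf G)\|\Delta y_h\|_{L^2(Q_T)}$, so that $v_h := \widehat v_h\circ\mathbf G_T^{-1}\in U_h$ fulfils the desired estimate and completes the proof.
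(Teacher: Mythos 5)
Your proof follows essentially the same route as the paper: reduce the inf-sup bound to an $L^2$-best-approximation estimate via the projection $\Pi_h$, observe that $\partial_t y_h\in U_h$ exactly (this is the paper's Lemma~\ref{lem:for:time}), then pull back $\Delta y_h$ to the parameter domain, expand it as a sum of $W^{1,\infty}$ coefficients times splines already lying in $\widehat U_h$, and locally freeze the coefficients on each B-spline support to land in $\widehat U_h$ with an $O(h_x)$ error (this is the paper's Theorem~\ref{thrm:distortion:error} / Lemma~\ref{lem:for:laplace}, which uses the local average $\overline\rho_\ell$ where you use a point value $\widehat a_{kl}(\widehat x_m^\ast)$ --- an inconsequential variant). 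The final closure via stability of the B-spline basis (the paper's Theorem~\ref{thrm:scherer}) and integration by parts to trade $\|y_h\|_{H^2}$ for $\|\Delta y_h\|_{L^2}$ also matches the paper's argument.
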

In order to prove this result, we use the following theorem.
\begin{theorem}\label{thrm:distortion:error}
    Let $V_h:=S_{p,k,\mathbf Z}(\widehat \Omega)$ for some $p>k\ge 0$ and some $\mathbf Z$.    Then, there is a constant $c>0$ only depending on $p$, $k$ and $h_{\mathbf Z}/h_{\mathbf Z,\min}$ such that
    \begin{align*}
            \inf_{v_h \in V_h}
            \|\rho w_h - v_h\|_{L^2(\widehat \Omega)}
            \le c h_{\mathbf Z}\; | \rho |_{W^1_\infty(\widehat \Omega)} \|w_h\|_{L^2(\widehat \Omega)}
   \;
   \foralls w_h\in V_h, \; \rho \in W^1_\infty(\widehat \Omega).
    \end{align*}
\end{theorem}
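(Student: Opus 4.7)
The plan is to take $v_h := \Pi_h(\rho w_h)$ for a suitable quasi-interpolation operator $\Pi_h : L^2(\widehat \Omega) \to V_h$ and to control the error element by element by freezing $\rho$ to a local constant on each Bézier element of the tensor-product mesh.

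First I would fix a tensor-product quasi-interpolant $\Pi_h$ onto $V_h$ with three standard properties that are available in the spline and Isogeometric Analysis literature (e.g.\ by tensorizing a local univariate spline projector of de Boor or Schumaker type): (i) $\Pi_h$ is a projection onto $V_h$, so $\Pi_h v_h = v_h$ for every $v_h \in V_h$; (ii) $\Pi_h$ is locally $L^2$-stable, i.e., $\|\Pi_h g\|_{L^2(K)} \le c_1 \|g\|_{L^2(\widetilde K)}$ for every Bézier element $K$, where $\widetilde K$ is the union of Bézier elements intersecting the supports of the basis functions that are active on $K$; and (iii) the patches $\widetilde K$ have bounded overlap and satisfy $\mathrm{diam}(\widetilde K) \le c_2 h_{\mathbf Z}$. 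The constants $c_1$ and $c_2$ depend only on $p$, $k$ and the mesh ratio $h_{\mathbf Z}/h_{\mathbf Z,\min}$.

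Next, on each Bézier element $K$, I would pick an arbitrary reference point $\xi_K \in K$ and set $\bar \rho_K := \rho(\xi_K) \in \RR$. Since $\bar \rho_K w_h \in V_h$ and $\Pi_h$ is a projection, we have $\Pi_h(\bar \rho_K w_h) = \bar \rho_K w_h$, which yields the identity
\[
        \rho w_h - \Pi_h(\rho w_h)
        = (\rho - \bar \rho_K)\, w_h - \Pi_h\bigl((\rho - \bar \rho_K)\, w_h\bigr)
        \quad\text{on }K.
\]
Combining the triangle inequality with the local stability (ii) and the elementary mean-value bound
$\|\rho - \bar \rho_K\|_{L^\infty(\widetilde K)} \le \mathrm{diam}(\widetilde K)\, |\rho|_{W^1_\infty(\widetilde K)} \le c_2 h_{\mathbf Z}\, |\rho|_{W^1_\infty(\widehat \Omega)}$
gives
\[
        \|\rho w_h - \Pi_h(\rho w_h)\|_{L^2(K)}
        \le c_3\, h_{\mathbf Z}\, |\rho|_{W^1_\infty(\widehat \Omega)}\,\|w_h\|_{L^2(\widetilde K)}.
\]
Squaring, summing over all Bézier elements $K\subset \widehat \Omega$ and invoking the finite-overlap property (iii) closes the estimate with $v_h := \Pi_h(\rho w_h) \in V_h$.

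The main obstacle is securing a quasi-interpolant enjoying properties (i)--(iii) simultaneously on a non-uniform tensor-product spline space with arbitrary interior smoothness $k$; this is classical in spline approximation theory, so in the writeup I would simply invoke an existing construction rather than rebuild one. The only genuinely new ingredient is the decomposition $\rho w_h - \Pi_h(\rho w_h) = (\rho - \bar\rho_K) w_h - \Pi_h((\rho - \bar\rho_K) w_h)$, whose purpose is to exploit the projection property to transfer the full $h_{\mathbf Z}$ factor onto $\rho$ and thereby avoid any dependence on $\|\rho\|_{L^\infty}$ (which would otherwise appear if one simply bounded $|\rho w_h|_{H^1}$ and used an $H^1\to L^2$ Céa-type estimate).
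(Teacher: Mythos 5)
Your proposal is correct, but it follows a genuinely different route from the paper. The paper never invokes a quasi-interpolation operator; it works directly at the level of B-spline coefficients. Writing $w_h = \sum_\ell w_\ell \phi_\ell$, the paper sets $v_\ell := \overline{\rho}_\ell\, w_\ell$ with $\overline\rho_\ell$ the mean of $\rho$ over $\supp\phi_\ell$, uses the bounded support and bounded overlap of B-splines together with $\phi_\ell\in[0,1]$ to reduce to $\sum_\ell |w_\ell|^2\,\|\rho-\overline\rho_\ell\|_{L^2(\supp\phi_\ell)}^2$, applies a Poincar\'e inequality on $\supp\phi_\ell$ to extract the $h\,|\rho|_{W^1_\infty}$ factor, and then closes by invoking the Scherer--Schadrin lower bound $\sum_\ell |w_\ell|^2 \le c\, h^{-d}\|w_h\|_{L^2}^2$ on the B-spline basis (Theorem~\ref{thrm:scherer} in the Appendix). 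You replace this coefficient-level argument with a black-box local $L^2$-stable spline projector $\Pi_h$ and an element-wise projection identity $\rho w_h - \Pi_h(\rho w_h) = (\rho-\bar\rho_K)w_h - \Pi_h\bigl((\rho-\bar\rho_K)w_h\bigr)$ that kills the $\|\rho\|_{L^\infty}$ dependence. What the paper's approach buys is self-containment: it needs only the B-spline basis stability estimate, which is cited and easy to state, and it avoids committing to a specific quasi-interpolant construction. What your approach buys is modularity and generality: the argument applies verbatim to any discrete space admitting a locally $L^2$-stable projection with localized support, not just tensor-product splines, and the projection identity is a cleaner mechanism for transferring the full power of $h$ onto $\rho$. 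Both routes yield constants with the same qualitative (exponential-in-$p$) dependence, since the quasi-interpolant stability constant and the Scherer--Schadrin constant both degrade similarly. One small point worth making explicit in a final writeup: the element patch $\widetilde K$ must have diameter bounded by $c\,h_{\mathbf Z}$ with $c$ depending only on $p$ and the mesh ratio; this is guaranteed by the $(p+1)$-element support of univariate B-splines and the quasi-uniformity assumption, but deserves a sentence.
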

A proof of this theorem is given in the Appendix.  The following lemma follows from Theorem~\ref{thrm:distortion:error} using standard substitution and chain rules.
\begin{lemma}\label{lem:for:laplace}
        There is a constant $c>0$ only depending on $\mathbf G$, $p$, $k$ and $h_{\mathbf Z}/h_{\mathbf Z,\min}$ such that
        \[
            \inf_{v_h\in U_h}
                    \|-  \Delta y_h-v_h\|_{L^2(Q_T)} \le c h_{\mathbf Z} \|-  \Delta y_h\|_{L^2(Q_T)}\qquad\foralls y_h\in Y_h.
        \]
\end{lemma}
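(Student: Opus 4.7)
The plan is to pull everything back to the parameter domain via $\widehat y_h := y_h \circ \mathbf G_T$, write $(\Delta y_h) \circ \mathbf G_T$ as a finite sum of $W^1_\infty(\widehat\Omega)$-coefficients times spatial derivatives of $\widehat y_h$ of order at most two, apply Theorem~\ref{thrm:distortion:error} to each term to build an approximant in $\widehat U_h$, and finally invoke elliptic $H^2$-regularity to dominate the resulting error by $\|\Delta y_h\|_{L^2(Q_T)}$. The main obstacle is exactly this last ingredient: Theorem~\ref{thrm:distortion:error} naturally yields a bound involving $\|\widehat\nabla^2 \widehat y_h\|_{L^2(\widehat Q_T)}$, which must be converted into an estimate in terms of the physical Laplacian of $y_h$.

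For the construction, the chain rule together with~\eqref{eq:ass:geo} gives a representation
\begin{equation*}
(\Delta y_h)\circ\mathbf G_T(t,\widehat x) = \sum_{i,j=1}^d a_{ij}(\widehat x)\,\partial_{\widehat x_i}\partial_{\widehat x_j}\widehat y_h(t,\widehat x) + \sum_{i=1}^d b_i(\widehat x)\,\partial_{\widehat x_i}\widehat y_h(t,\widehat x),
\end{equation*}
with $a_{ij}, b_i \in W^1_\infty(\widehat\Omega)$ of seminorms depending only on $\mathbf G$. Abbreviating the right-hand side as $\sum_k \rho_k(\widehat x)\,w_k(t,\widehat x)$, a dimension count shows that each $w_k$ lies in $S_{p_t,k_t,Z_t}\otimes S_{p_x,k_x-2,\mathbf Z_x} \subseteq \widehat U_h$, so for each fixed $t$ Theorem~\ref{thrm:distortion:error} with $V_h = S_{p_x,k_x-2,\mathbf Z_x}$ gives an approximant of $\rho_k\,w_k(t,\cdot)$ with error of order $h_{\mathbf Z_x}\,|\rho_k|_{W^1_\infty(\widehat\Omega)}\,\|w_k(t,\cdot)\|_{L^2(\widehat\Omega)}$. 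Realising this via the slice-wise $L^2(\widehat\Omega)$-orthogonal projector $\widehat P_x$ onto $V_h$ and setting $\widehat v_h := \sum_k \widehat P_x(\rho_k w_k)$, the tensor-product structure guarantees $\widehat v_h \in \widehat U_h$; squaring, integrating in $t$, summing via the triangle inequality, and exploiting that $|\det\nabla\mathbf G_T|$ is bounded above and below then yields
\begin{equation*}
\inf_{v_h\in U_h}\|-\Delta y_h - v_h\|_{L^2(Q_T)} \le C\,h_{\mathbf Z_x}\bigl(\|\widehat\nabla^2\widehat y_h\|_{L^2(\widehat Q_T)} + \|\widehat\nabla\widehat y_h\|_{L^2(\widehat Q_T)}\bigr).
\end{equation*}

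To close the argument, I must bound the right-hand side by $\|\Delta y_h\|_{L^2(Q_T)}$. Chain-rule estimates, using that $\mathbf G^{-1}$ inherits sufficient smoothness from~\eqref{eq:ass:geo}, reduce the task to the slice-wise inequality $\|y_h(t,\cdot)\|_{H^2(\Omega)} \le C\,\|\Delta y_h(t,\cdot)\|_{L^2(\Omega)}$ for $y_h(t,\cdot)\in H^1_0(\Omega)\cap H^2(\Omega)$. This is the standard elliptic $H^2$-regularity estimate for the Dirichlet Laplacian combined with Poincar\'e's inequality, available for $\Omega$ by virtue of~\eqref{eq:ass:geo}; integrating over $t\in(0,T)$ and concatenating the inequalities concludes the proof.
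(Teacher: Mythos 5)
Your proof follows essentially the same route as the paper's: pull back to the parameter domain, expand the transported Laplacian into first- and second-order derivative terms of $\widehat y_h$ with time-independent $W^1_\infty$ coefficients, approximate each term in $\widehat U_h$ via Theorem~\ref{thrm:distortion:error}, and then push the resulting $H^2$-seminorm bound back to $\|\Delta y_h\|_{L^2(Q_T)}$. The only real deviation is in the very last step, and it is worth flagging because it actually matters. The paper closes with the claim that $|y_h|_{H^2(\Omega)} = \|-\Delta y_h\|_{L^2(\Omega)}$ ``using integration by parts''; but that identity is specific to $H^2_0(\Omega)$ or to flat (box-type) boundaries. For $y_h\in H^2(\Omega)\cap H^1_0(\Omega)$ on a curved domain, the integration by parts produces curvature boundary terms, and equality fails already for $u=1-r^2$ on the unit disk, where $|u|_{H^2(\Omega)}^2=8\pi$ while $\|\Delta u\|_{L^2(\Omega)}^2=16\pi$. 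You instead invoke the elliptic $H^2$-regularity estimate $\|y_h(t,\cdot)\|_{H^2(\Omega)}\le C\|\Delta y_h(t,\cdot)\|_{L^2(\Omega)}$, which is what is actually needed and is correct; since the constant depends only on $\Omega=\mathbf G(\widehat\Omega)$, the stated dependence of $c$ on $\mathbf G$ is preserved. So your version is, if anything, the more careful one. Be aware, though, that this regularity estimate is not a formal consequence of \eqref{eq:ass:geo} alone: $\Omega$ inherits corners from $\widehat\Omega=(0,1)^d$, so one implicitly needs those corner angles to be convex (or $\Omega$ to be $C^{1,1}$) for the Dirichlet $H^2$-shift to hold with a finite constant; the paper shares this implicit assumption. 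Two small technical remarks on your middle step: Theorem~\ref{thrm:distortion:error} is stated for $p>k\ge 0$, so the application with $V_h=S_{p_x,k_x-2,\mathbf Z_x}$ requires $k_x\ge 2$ (or a $k=-1$ extension of that theorem); and applying the theorem slice-wise in $t$ and integrating is equivalent to the paper's direct application over $Q_T$ precisely because the coefficients $a_{ij},b_i$ are independent of time, which you rightly exploit.
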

\begin{proof}
        Within this proof, $c$ is a generic constant, only depending on $p$, $k$ and $h_{\mathbf Z}/h_{\mathbf Z,\min}$ that can have different values in different instances.
        First, observe that one can show with standard chain and substitution rules and~\eqref{eq:ass:geo} that
        \[
        \|-  \Delta y_h-v_h\|_{L^2(Q_T)}
        \le c
        \| - \tfrac{1}{|\det \nabla \mathbf G|}  \nabla \cdot (A\nabla \widehat y_h) -\widehat v_h \|_{L^2(\widehat Q_T)},
        \]
        where $A :=\tfrac1{|\det \nabla \mathbf G|} \Cf^\top (\nabla \mathbf G) \Cf( \nabla \mathbf G)$ and $\Cf(\mathbf J) = (\det \mathbf J) \mathbf J^{-\top}$ is the cofactor matrix. Using the triangle inequality, we have further
        \begin{align*}
                &\inf_{v_h \in U_h} \|-  \Delta y_h-v_h\|_{L^2(Q_T)}
                \le c
                \inf_{\widehat v_h\in \widehat U_h}
                \| - \tfrac{1}{|\det \nabla \mathbf G|} \nabla \cdot (A\nabla \widehat y_h) -\widehat v_h \|_{L^2(\widehat Q_T)}
                \\&
                \le
                c\sum_{j=1}^d \inf_{\widehat v_h\in \widehat U_h}  \|\beta_j \tfrac{\partial \widehat y_h}{\partial x_j}-\widehat v_h\|_{L^2(\widehat Q_T)}
                +c\sum_{i,j=1}^d \inf_{\widehat v_h\in \widehat U_h}  \|\alpha_{i,j} \tfrac{\partial^2 \widehat y_h}{\partial x_i\partial x_j}-\widehat v_h\|_{L^2(\widehat Q_T)},
        \end{align*}
        where $\alpha_{i,j}= \tfrac{1}{|\det \nabla \mathbf G|}A $ and $\beta_{i}=\tfrac{1}{|\det \nabla \mathbf G|}\nabla \cdot A$. Since the determinant is uniformly bounded from below (see~\eqref{eq:ass:geo}), we have $\tfrac1{|\det \nabla \mathbf G |}\in W^2_\infty(\widehat \Omega)$. By combining these arguments and since none of these terms depends on the time $t$, we have $A\in W^2_\infty(\widehat Q_T)$. Thus, $\alpha_{i,j}\in W^2_\infty(\widehat Q_T)$ and $\beta_i\in W^1_\infty(\widehat Q_T)$. Since $\alpha_{i,j}$ and $\beta_j$ are independent of $t$, we can apply Theorem~\ref{thrm:distortion:error} for $Q_T$ and obtain
         \begin{align*}
                & \inf_{v_h \in U_h} \|-  \Delta y_h-v_h\|_{L^2(Q_T)}
                \\&\quad
                    \le c h_{\mathbf Z}  \sum_{j=1}^d |\beta_j|_{W^1_\infty(\widehat Q_T)}  |  \widehat y_h|_{H^1(\widehat Q_T)}
                    + c h_{\mathbf Z}\sum_{i,j=1}^d |\alpha_{i,j}|_{W^1_\infty(\widehat Q_T)} | \widehat y_h|_{H^2(\widehat Q_T)}
            \\&\quad \le c  h_{\mathbf Z} \| \widehat y_h\|_{H^2(\widehat Q_T)}.
        \end{align*}
        Using substitution and chain rule and the Poincar\'e inequality, we have further
          \begin{align*}
                &\inf_{v_h \in U_h} \|-  \Delta y_h-v_h\|_{L^2(Q_T)}
                \le c  h \| y_h\|_{H^2(Q_T)}
                \le c  h | y_h|_{H^2(Q_T)}.
        \end{align*}
        Since $y_h\in H^2(\Omega)\cap H^1_0(\Omega)$, we have using integration by parts that $| y_h|_{H^2( \Omega)}=\|-\Delta y_h\|_{L^2( \Omega)}$, which finishes the proof.
\end{proof}

Since $\mathbf G_T$ is defined as a simple lifting of $\mathbf G$, we immediately obtain the
following result.
\begin{lemma}\label{lem:for:time}
        We have $\inf_{v_h\in U_h} \|\partial_t y_h-v_h\|_{L^2(Q_T)} = 0$
        for all $y_h \in Y_h$.
\end{lemma}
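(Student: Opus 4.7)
The key observation is that the space-time parametrization $\mathbf G_T(t,\widehat x) = (t, \mathbf G(\widehat x))$ acts as the identity on the time variable, so differentiation in $t$ commutes with the pull-back. Concretely, for any $y_h = \widehat y_h \circ \mathbf G_T^{-1} \in Y_h$, the chain rule yields
\[
    \partial_t y_h(t,x) = (\partial_t \widehat y_h)(t, \mathbf G^{-1}(x)) = (\partial_t \widehat y_h) \circ \mathbf G_T^{-1}.
\]
So it suffices to show that $\partial_t \widehat y_h \in \widehat U_h$, because then $v_h := \partial_t \widehat y_h \circ \mathbf G_T^{-1} \in U_h$ achieves the infimum with value zero.

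To check that $\partial_t \widehat y_h \in \widehat U_h$, I would invoke the tensor-product structure of the discrete spaces. Since $\widehat y_h \in S_{p_t,k_t,Z_t} \otimes S_{p_x,k_x,\mathbf Z_x}$, differentiating in $t$ reduces both the polynomial degree and the smoothness by one in the temporal factor, while leaving the spatial factor unchanged:
\[
    \partial_t \widehat y_h \in S_{p_t-1, k_t-1, Z_t} \otimes S_{p_x, k_x, \mathbf Z_x}.
\]
Since raising the polynomial degree (while keeping smoothness) enlarges a spline space, $S_{p_t-1, k_t-1, Z_t} \subseteq S_{p_t, k_t-1, Z_t}$, and since lowering the smoothness (while keeping the degree) also enlarges it, $S_{p_x, k_x, \mathbf Z_x} \subseteq S_{p_x, k_x-2, \mathbf Z_x}$. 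Combining both inclusions yields
\[
    \partial_t \widehat y_h \in S_{p_t,k_t-1,Z_t} \otimes S_{p_x, k_x-2, \mathbf Z_x} = \widehat U_h,
\]
which completes the argument.

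There is essentially no obstacle here since the only subtlety (distortion by the geometry map that appeared in Lemma~\ref{lem:for:laplace}) is absent: the parametrization $\mathbf G_T$ is a pure lifting in space, so the temporal derivative carries no Jacobian factor. The vanishing boundary and initial conditions imposed in~\eqref{eq:Ydisc} play no role in this argument.
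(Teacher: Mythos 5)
Your proof is correct and takes essentially the same route as the paper: both reduce to the inclusion $\partial_t \widehat Y_h \subseteq \widehat U_h$ on the parameter domain (which the paper had already asserted in \eqref{eq:inclusion:discrete}) and use the fact that $\mathbf G_T$ acts as the identity in time. You spell out the spline-degree and smoothness inclusions explicitly, and you correctly note that the time derivative commutes with the pull-back without any Jacobian factor, so the infimum is attained exactly rather than merely bounded by $c_{\mathbf G}$ times the parameter-domain infimum as in the paper's slightly more conservative phrasing.
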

\begin{proof}
        Let $y_h \in Y_h$ arbitrary but fixed with $\widehat y_h:=y_h \circ \mathbf G_T$.
        Using the substitution rule \eqref{eq:ass:geo} and the fact that $\mathbf G_T$ is constant in time, we immediately have
        \[
            \inf_{v_h\in U_h}
                    \|\partial_t y_h-v_h\|_{L^2(Q_T)}
                    \le c_{\mathbf G}
            \inf_{\widehat v_h\in \widehat U_h}
                    \| \partial_{ t} \widehat y_h - \widehat v_h\|_{L^2(\widehat Q_T)},
        \]
        which vanishes for all $\widehat y_h \in \widehat Y_h$ since $\partial_{t} \widehat Y_h \subseteq \widehat U_h$.
\end{proof}

Using these results, we can prove Theorem~\ref{thrm:discrinfsup}.

\begin{proofof}{of Theorem~\ref{thrm:discrinfsup}}
First, we observe that an inf-sup statement can be rewritten in terms of an approximation error statement. Let $y_h$ be arbitrarily but fixed and $w_h:=L_h y_h = \partial_t y_h -\kappa\Delta y_h$. Let $\Pi_h$ be the $L^2(Q_T)$-orthogonal projection into $U_h$. By choosing $\lambda_h:=\Pi_h w_h = \Pi_h  (\partial_t y_h-\Delta y_h)$, we obtain
\begin{align*}
        & \sup_{\lambda_h \in U_h} \frac{ \langle L_h y_h , \lambda_h\rangle^2}{ \|\lambda_h\|_{L^2(Q_T)}^2}
                \ge \frac{ (\Pi_h w_h , \Pi_h w_h)_{L^2(Q_T)}^2}{ \|\Pi_h w_h\|_{L^2(Q_T)}^2}
                = \|\Pi_h w_h\|_{L^2(Q_T)}^2
        \\& \quad = \|w_h\|_{L^2(Q_T)}^2 - \|w_h - \Pi_h w_h\|_{L^2(Q_T)}^2 = \|w_h\|_{L^2(Q_T)}^2 - \inf_{v_h\in U_h} \| w_h-v_h\|_{L^2(Q_T)}^2
        \\& \quad \ge \|w_h\|_{L^2(Q_T)}^2 - \left( \kappa
            \inf_{v_h\in U_h} \| -\Delta y_h -v_h\|_{L^2(Q_T)}
            + \inf_{v_h\in U_h} \| \partial_t y_h-v_h\|_{L^2(Q_T)} \right)^2.
\end{align*}
Using Lemmas~\ref{lem:for:laplace}, \ref{lem:for:time} and \ref{lemma:YZeq} and $w_h = \partial_t y_h-\Delta y_h$, we immediately obtain the desired result.
\end{proofof}

Using Theorem~\ref{thrm:discrinfsup}, Lemma~\ref{lemma:YZeq} and the definitions of the involved matrices, we obtain
\begin{equation}\label{eq:discr:infsup}
\begin{aligned}
    &\underline y_h^\top L_h^\top M_h^{-1} L_h \underline y_h
     = \sup_{\underline \lambda_h \in \mathbb R^{\dim U_h}} \frac{ \underline \lambda_h^\top L_h \underline y_h }{ \|\underline \lambda_h\|_{M_h}^2}
    = \sup_{\lambda_h \in U_h} \frac{ \langle L y_h, \lambda_h \rangle }{ \|\lambda_h\|_{L^2(Q_T)}^2}
    \\&\quad\ge (1-ch_x^2) \| \partial_t y_h - \kappa \Delta y_h\|_{L^2(Q_T)}^2
    \\&\quad\ge (1-ch_x^2) (\| \partial_t y_h\|_{L^2(Q_T)}^2 + \|\kappa \Delta y_h\|_{L^2(Q_T)}^2)
    = (1-ch_x^2) \underline y_h^\top
        (   C_h + \kappa^2 B_h  ) \underline y_h.
\end{aligned}
\end{equation}
The combination of~\eqref{eq:discr:cont} and~\eqref{eq:discr:infsup} yields
\[
    (1-ch_x^2)(   C_h + \kappa^2 B_h  )
    \le L_h^\top M_h^{-1} L_h
    \le 2(   C_h + \kappa^2 B_h  ),
\]
which shows equivalence of these matrices if the grid is fine enough. Under this assumption, we obtain -- as in the continuous case -- a condition number bound for the discretized problem:
\[
    \kappa(\mathcal B_h^{-1} \mathcal A_h)
    \le \frac{2 \cos(\pi/7)}{(1-ch_x^2)\sin(\pi/14)} \approx \frac{8.10}{1-ch_x^2},
\]
so, for $h_x\to 0$, the upper bound on the condition number for the discretized problem converges to the upper bound for the condition number for the continuous problem.

\begin{remark}[2--by--2 formulation]
    The same arguments can be applied to the 2--by--2 formulation as well. In this case, the upper bound for the condition number of the preconditioned system is $\frac{2 \cos(\pi/5)}{(1-ch_x^2)\sin(\pi/10)} \approx \frac{5.24}{1-ch_x^2}$.
\end{remark}

\section{Fast diagonalization in time}
\label{sec:fd}

In order to solve the linear system $\mathcal A_h \, \underline{\mathbf x}_h = \underline{\mathbf b}_h$ efficiently using the preconditioner $\mathcal B_h$ as defined in~\eqref{eq:Bh}, we need to be able to realize the application of the preconditioner efficiently, i.e., the application of $P_h^{-1}$ and $M_h^{-1}$ to any vector.
Certainly, these matrices are sparse, symmetric and positive definite, so any standard method could be applied. However, it is worth noting that that linear system can be quite large since it involves the whole space-time cylinder. We propose to adopt the fast diagonalization method, see~\cite{sangalli2016isogeometric}, in time to decouple problem on the whole space-time cylinder. Since we assume the fast diagonalization method to be well-known, we sketch the ideas only briefly.

The matrix $M_{q_t,h}$ is obtained by evaluating $(\cdot,\cdot)_{L^2(q_t)}$ for the basis functions, specifically $[M_{q_t,h}]_{\tilde n,n} = (\psi_n,\psi_{\tilde n})_{L^2(q_t)}$. Recall that the basis functions $\psi_n$ have tensor-product structure~\eqref{eq:tps}, i.e., $\psi_{iN_x+j}(t,x)=\varphi_i(t)\;\phi_j(x)$.
Assuming that the observation that is distributed in $\Omega$, i.e.,
\[
    q_t = \mathcal T_{\mathcal O} \times \Omega \quad \mbox{for some}\quad \mathcal T_{\mathcal O} \subseteq [0,T],
\]
we have
\[
    [M_{q_t,h}]_{\tilde i N_x+\tilde j,iN_x+j}
    =(\psi_{iN_x+j},\psi_{\tilde iN_x+\tilde j})_{L^2(q_t)}
    =
    \underbrace{(\varphi_{i},\varphi_{\tilde i})_{L^2(T_{\mathcal O})}}_{\displaystyle [M_{q_t,t}]_{\tilde i,i:=}}
    \underbrace{(\phi_{j},\psi_{\tilde j})_{L^2(\Omega)}}_{\displaystyle [M_x]_{\tilde j,j}:=}
\]
with mass matrices $M_x$ and $M_{q_t,t}$ in space and time, respectively. Using the notation of a Kronecker product, we just write this equivalently as
\[
        M_{q_t,h} = M_{q_t,t} \otimes M_x.
\]
With completely analogous arguments, we obtain
\[
    B_h = M_t \otimes B_x
    \quad \mbox{and}\quad
    C_h = K_t \otimes M_x,
\]
where $B_x$ is a fourth order stiffness matrix in space, $K_t$ is a second order stiffness matrix in time and $M_x$ and $M_t$ are mass matrices in space and time.
Using these decompositions, we obtain
\begin{equation}
    \label{eq:PyKron}
    P_h=M_{q_t,h} + \alpha \kappa^2 B_h + \alpha C_h
        = (M_{q_t,t}+\alpha K_t) \otimes M_x + \alpha \kappa^2 M_t\otimes B_x.
\end{equation}
In order to apply $P_h^{-1}$ to a vector efficiently, we use the fast diagonalization approach.
Since all of these matrices
are symmetric positive definite and thus,
the pencil $(M_{q_t,t} + \alpha K_t;\, M_t)$ has a generalized eigendecomposition; so there is a matrix $U_t$ and a diagonal matrix $D_t=\mbox{diag}(d_1,\cdots,d_{N_t})$ such that
\begin{equation}
\label{eq:GEDtime}
M_{t} = U^{-\top}_{t} U^{-1}_{t}
\quad \text{and} \quad
M_{q_t,t}+\alpha K_{t} = U^{-\top}_{t} D_{t}U^{-1}_{t}.
\end{equation}
By substituting this into \eqref{eq:PyKron}, we obtain
\begin{align*}
    P_h &= U_t^{-\top} D_t U_t^{-1} \otimes M_x + \alpha \kappa^2 U_t^{-\top} U_t^{-1} \otimes B_x\\
    &= (U_t^{-\top}\otimes \I)(D_t \otimes  M_x + \alpha \kappa^2 \I \otimes B_x)(U_t^{-1}\otimes \I),
\end{align*}
where $\I$ is the identity matrix.
The application of $P^{-1}_h$ to a vector $\underline{r}_h$, that is; solving
$
P_h \underline{s}_h = \underline{r}_h
$
is done with following the steps:
\begin{algorithm}[H]
\caption{Fast diagonalization in time}\label{alg:DFtime}
\begin{algorithmic}[1]
    \State Compute the generalized eigendecomposition \eqref{eq:GEDtime}
    \State Set $\underline{\tilde{r}}_h = (U_t\otimes \I)^\top \underline{r}_h$
    \State Solve $(D_t\otimes M_x + \alpha \kappa^2 \I\otimes B_x)\underline{\tilde s}_h = \underline{\tilde{r}}_h$
    \State Set $\underline{s}_h = (U_t\otimes \I) \underline{\tilde{s}}_h$
\end{algorithmic}
\end{algorithm}

Note that Step~1 of the algorithm has to be done only once, even if the preconditioner has to be applied several times. Its computational costs are independently of the problem size in the space direction. The application of Steps~2 and 4 is done independently for each of the degrees of freedom in space; thus, it can be easily realized in parallel. Finally, the application of Step~3 is done independently for each point in the frequency domain. So, we have to solve $N_t$ linear systems
\[
    (d_j M_x + \alpha \kappa^2 B_x)  \underline {\tilde s}_h^{(j)} = \underline {\tilde r}_h^{(j)},
\]
where $d_1,\cdots,d_{N_t}>0$ are the diagonal entries of $D_t$ and $\underline {\tilde r}_h^{(j)}$ and $\underline {\tilde s}_h^{(j)}$ are the corresponding blocks of $\underline {\tilde r}_h$ and $\underline {\tilde s}_h$, respectively.
These linear systems correspond to the following variational problems in space: Find $w_h \in W_h := \{ w_h: w_h\circ \mathbf G \in S_{p_x,t_x,\mathbf Z_x} \mbox{ and } w_h|_{\partial\Omega}=0\}$ such that
\begin{equation}
\label{eq:Biharmonic}
   d_j (w_h,v_h)_{L^2(\Omega)} + \alpha \kappa^2 (\Delta w_h, \Delta v_h)_{L^2(\Omega)}
   = (g_h,v_h)_{L^2(\Omega)} \quad\forall\; v_h\in W_h.
\end{equation}
We are free to choose any solution strategy for solving that problem.
\begin{remark}[Fast diagonalization in space]
    One possibility for solving the problems~\eqref{eq:Biharmonic} is to apply fast diagonalization also in space. While the proposed fast diagonalization approach in time is exact, a fast diagonalization approach in space is inexact for two reasons.
    First, the involved matrices have a Kronecker-product structure only for affine-linear geometry functions $\mathbf G(x_1,\ldots, x_d)$, so the fast diagonalization preconditioner would need to approximate the geometry function in any other case. Numerical experiments have shown that the effects of such an approximation are more severe for biharmonic problems, compared to the Poisson problem.
    Second, an expansion of the bilinear form $(\Delta u_h, \Delta v_h)_{L^2(\Omega)}$ also yields mixed terms, like $(\partial_{xx} u_h, \partial_{yy} v_h)_{L^2(\Omega)}$ for $d=2$. It is possible to show that discarding these terms yields a robust approximation, an approximation nonetheless.
\end{remark}
\begin{remark}[Multigrid solvers]
    An alternative for solving~\eqref{eq:Biharmonic} are multigrid solvers. In the next section, we present results for multigrid solvers with standard Gauss-Seidel smoothers. It is worth noting that these multigrid solvers are not robust with respect to the spline degree $p$. Robust multigrid solvers for the biharmonic problem were proposed in \cite{SognTakacsBiharmonic2}; however, this approach suffers from similar issues as fast diagonalization if the geometry function is not affine-linear.
\end{remark}
\begin{remark}[Computational costs]
Finally, we want to discuss the computational costs of Algorithm~\ref{alg:DFtime}. Let $N_t$ be the number of degrees of freedom in time direction and $N_x$ be the number of degrees of freedom in the space directions. Assuming standard algorithms and keeping in mind that $U_t$ is a dense matrix, Step~1 of the algorithm (computation of the eigendecomposition) costs $\mathcal O(N_t^3)$ flops, the Steps~2 and 4 of the algorithm cost $\mathcal O(N_t^2 N_x)$ flops. Step~3 requires the solution of $N_t$ elliptic problems. With multigrid solvers, each such problem can be solved with $\mathcal O(N_x)$ flops. By applying this approach, one would obtain an overall complexity of
\[
        \mathcal O(N_t^2N_x+N_t^3).
\]
Since one would expect $N_t\le N_x$, the first term dominates. Keeping in mind that the total number of degrees of freedom in $N=N_tN_x$, the overall complexity would be $\mathcal O(N_tN)$ in this case.
\end{remark}
\begin{remark}[Other kinds of observation domains]
The approach proposed in this section is also possible for the case discussed in Remark~\ref{rem:ti}.
If the observation is distributed in time, but limited in space (this includes the case of boundary observation discussed in Remark~\ref{rem:bo}), we have $M_{q_t,h} = M_t \otimes M_{q_t,x}$, which can be represented as follows
\[
    P_h = \alpha K_t \otimes M_x + M_t \otimes ( M_{q_t,x} +\alpha \kappa^2 B_x).
\]
This allows for an application of the fast diagonalization algorithm completely analogous to the case discussed in this section.
\end{remark}

\section{Numerical results}
\label{sec:numeric}

In the following, we give results of numerical experiments that illustrate the presented theory.
All presented results have been implemented using the G+Smo library \cite{gismoweb}.
As computational domain $\Omega$, we choose the approximation to a quarter annulus domain depicted in Figure~\ref{fig:annulus}, which is parameterized with a B-spline of degree $2$ without inner knots. The space-time cylinder is $(0,1]\times \Omega$. As state equation, we consider the heat equation with homogeneous Dirichlet boundary conditions:
\begin{align*}
    \partial_t y - \kappa \Delta y & = u \quad\mbox{in}\quad (0,1]\times \Omega, \\
    y&=0 \quad \mbox{on} \quad (0,1]\times \partial \Omega,\\
    y&=y_0 \quad \mbox{on} \quad \{0\} \times \Omega,
\end{align*}
where $y_0$ is the characteristic function for the union of three circles with radius $0.2$ around the points $p_1$, $p_2$ and $p_3$ with
\[
            p_i= (\tfrac32 \cos (i\pi/8),
                  \tfrac32 \sin (i\pi/8)),
                  \qquad i=1,2,3,
\]
i.e., it is defined by
\[
    y_0(x) :=
    \begin{cases}
        1 & \mbox{ if } |x-p_i| < 0.2 \mbox{ for one } i\in\{1,2,3\},\\
        0 & \mbox{ otherwise,}
    \end{cases}
\]
see Figure~\ref{fig:ic} for the visualization of the projection of $y_0$ into a fine spline space (see below for details on the spline space).
\begin{figure}[htp]
\centering
\begin{minipage}{.45\textwidth}
  \centering
    \begin{tikzpicture}
        \draw (0, 0) node[inner sep=0] {\includegraphics[height=10em]{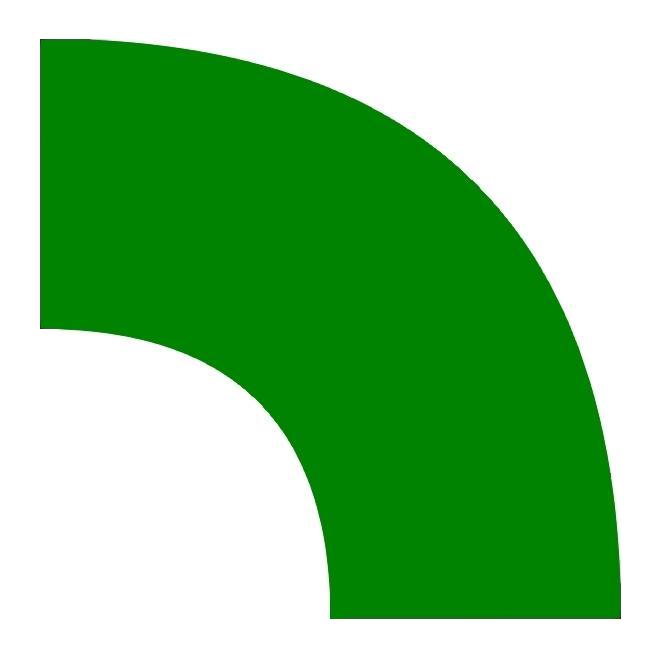}};

        \draw (-1.56, -1.57) -- (1.56, -1.57);
        \draw (-1.56, -1.65) -- (-1.56, -1.57);
        \draw (-1.56, -1.75) node {\tiny 0};
        \draw (0, -1.65) -- (0, -1.57);
        \draw (0, -1.75) node {\tiny 1};
        \draw (1.56, -1.65) -- (1.56, -1.57);
        \draw (1.56, -1.75) node {\tiny 2};

        \draw (-1.56, -1.57) -- (-1.56, 1.55);
        \draw (-1.63, -1.57) -- (-1.56, -1.57);
        \draw (-1.7, -1.57) node {\tiny 0};
        \draw (-1.63,0) -- (-1.56,0);
        \draw (-1.7, 0) node {\tiny 1};
        \draw (-1.63, 1.55) -- (-1.56, 1.55);
        \draw (-1.7, 1.55) node {\tiny 2};
    \end{tikzpicture}
  \captionof{figure}{Quarter annulus domain\\\;\\\; }
  \label{fig:annulus}
\end{minipage}\quad
\begin{minipage}{.45\textwidth}
    \centering
    \begin{tikzpicture}
        \draw (0, 0) node[inner sep=0] {\includegraphics[height=10em]{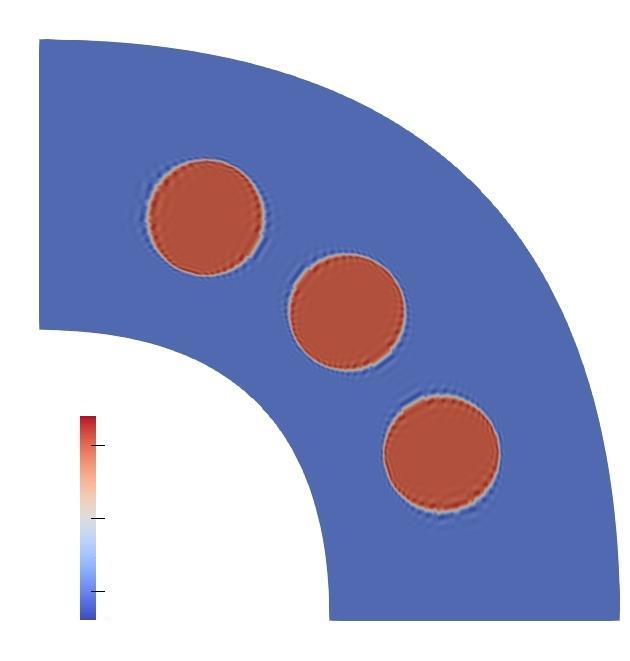}};
        \draw (-.95, -.6) node {\tiny 1\hspace{-.6px}.\hspace{-.6px}0};
        \draw (-.95, -1) node {\tiny 0\hspace{-.6px}.\hspace{-.6px}5};
        \draw (-.95, -1.38) node {\tiny 0\hspace{-.6px}.\hspace{-.6px}0};
    \end{tikzpicture}
    \caption{Initial condition $y_0$ projected into the spline space with $p=2$ and $\ell = 7$.}
    \label{fig:ic}
\end{minipage}
\end{figure}
As desired state $y_d$, we choose the solution of the state equation with the same initial condition and with vanishing source term, i.e., it satisfies
\begin{align*}
    \partial_t y_d - \kappa \Delta y_d & = 0 \quad\mbox{in}\quad (0,1]\times \Omega, \\
    y_d&=0 \quad \mbox{on} \quad (0,1]\times \partial \Omega,\\
    y_d&=y_0 \quad \mbox{on} \quad \{0\} \times \Omega.
\end{align*}
As observation domain, we choose
\begin{equation}
\label{eq:timeIter}
q_t = \left( (0,\tfrac{1}{16})\cup (\tfrac{4}{16},\tfrac{5}{16})\cup (\tfrac{12}{16},\tfrac{13}{16})\cup (\tfrac{15}{16},1) \right) \times \Omega,
\end{equation}
i.e., the control is distributed over the whole computational domain $\Omega$, but restricted to four time intervals.

The variational problem is discretized as follows. As initial grid (grid level $\ell=0$), we choose globally polynomial functions both in space and time, i.e., there are no inner knots. The following grid levels ($\ell=1,2,\ldots$) are obtained by uniform refinement, both in space and time. We choose the spline degree $p\ge 2$ to by uniform in space and time as well. The space for the state variable is chosen to be of maximum smoothness, so we choose
\[
    Y_h
    := \big\{
    y_h \,:\,
    y_h \circ \mathbf G \in
    S_{p,p-1,Z_t} \otimes S_{p,p-1,\mathbf Z_x},\,
    y_h|_{t=0}=0 \mbox{ and } y_h |_{(0,1]\times \partial\Omega}=0
    \big\}.
\]
Following the theory, we choose for the control and the adjoined state the space
\[
    U_h
    := \big\{
    u_h \,:\,
    u_h \circ \mathbf G \in
    S_{p,p-2,Z_t} \otimes S_{p,p-3,\mathbf Z_x}
    \big\}.
\]

Next, we have a look on a possible solution to the problem. Consider the case $p=2$ and $\ell=7$; here the space $Y_h$ has $2\,113\,536$ degrees of freedom ($129$ in time $\times$ $16\,384$ in space) and $U_h$ has $37\,896\,129$ degrees of freedom ($257$ in time $\times$ $147\,456$ in space). For this choice, we obtain the initial condition as depicted in Figure~\ref{fig:ic}. For the choice $\kappa=10^{-2}$ and $\alpha=10^{-3}$, the optimal state looks as depicted in Figure~\ref{fig:solution:sp3r7Limk2a3}.
\begin{figure}[htp]
    \centering
    \begin{tikzpicture}
        \draw (0, 0) node[inner sep=0] {\includegraphics[height=12em]{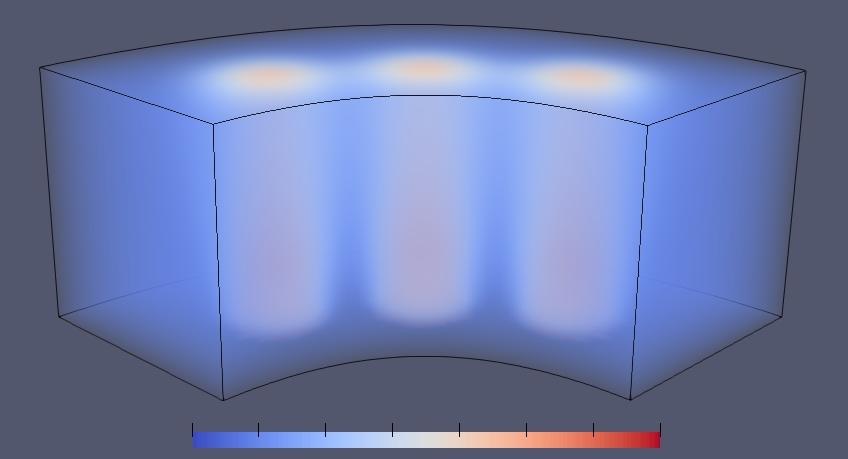}};
        \draw (-2.17, -1.67) node {\tiny -0\hspace{-.6px}.\hspace{-.6px}2};
        \draw (-1.52, -1.67) node {\tiny 0\hspace{-.6px}.\hspace{-.6px}0};
        \draw (-.91, -1.67) node {\tiny 0\hspace{-.6px}.\hspace{-.6px}2};
        \draw (-.29, -1.67) node {\tiny 0\hspace{-.6px}.\hspace{-.6px}4};
        \draw (.32, -1.67) node {\tiny 0\hspace{-.6px}.\hspace{-.6px}6};
        \draw (0.93, -1.67) node {\tiny 0\hspace{-.6px}.\hspace{-.6px}8};
        \draw (1.55, -1.67) node {\tiny 1\hspace{-.6px}.\hspace{-.6px}0};
        \draw (2.16, -1.67) node {\tiny 1\hspace{-.6px}.\hspace{-.6px}2};
    \end{tikzpicture}
    \begin{tikzpicture}
        \draw (0, 0) node[inner sep=0] {\includegraphics[height=12.7em]{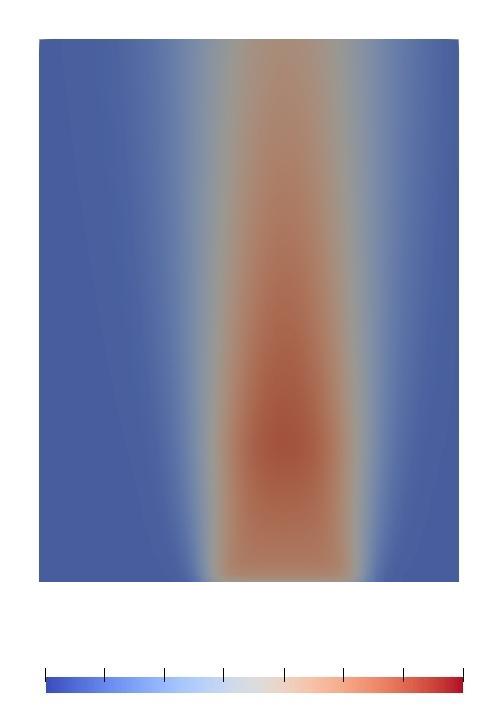}};
        \draw (-1.32, -1.44) -- (1.325, -1.44);
        \draw (-1.32, -1.44) -- (-1.32, 1.98);
        \draw (-1.32, -1.44) -- (-1.32, -1.55);
        \draw (-1.32, -1.65) node {\tiny $1$};
        \draw (0, -1.65) node {\tiny $x=y$};
        \draw (1.32, -1.44) -- (1.32, -1.55);
        \draw (1.32, -1.65) node {\tiny $2$};
        \draw (-1.32, -1.44) -- (-1.42, -1.44);
        \draw (-1.47, -1.44) node {\tiny $0$};
        \draw (-1.47, 0.23) node {\tiny $t$};
        \draw (-1.32, 1.98) -- (-1.42, 1.98);
        \draw (-1.47, 1.97) node {\tiny $1$};
        \draw (-1.34, -1.9) node {\tiny -0\hspace{-.6px}.\hspace{-.6px}2};
        \draw (-.92, -1.9) node {\tiny 0\hspace{-.6px}.\hspace{-.6px}0};
        \draw (-.54, -1.9) node {\tiny 0\hspace{-.6px}.\hspace{-.6px}2};
        \draw (-.16, -1.9) node {\tiny 0\hspace{-.6px}.\hspace{-.6px}4};
        \draw (.22, -1.9) node {\tiny 0\hspace{-.6px}.\hspace{-.6px}6};
        \draw (.60, -1.9) node {\tiny 0\hspace{-.6px}.\hspace{-.6px}8};
        \draw (.98, -1.9) node {\tiny 1\hspace{-.6px}.\hspace{-.6px}0};
        \draw (1.36, -1.9) node {\tiny 1\hspace{-.6px}.\hspace{-.6px}2};
    \end{tikzpicture}
    \caption{Computed state solution in the whole space time cylinder (left) and a cut through the center circle (right).}
    \label{fig:solution:sp3r7Limk2a3}
\end{figure}

Certainly, the results on the performance of the preconditioner are more in the focus of this paper. We set up a minimum residual (minres) solver in order to solve the 3--by--3 formulation of the KKT system. We use the proposed preconditioner, where we apply fast diagonalization in time. The problems in space are solved with a direct solver (using a sparse Cholesky factorization). As stopping criterion, we choose the reduction of the initial residual by a factor of $10^{-6}$. In the first five columns of Table~\ref{table:1}, we show the number of iterations required to reach the desired reduction factor for the fixed discretization ($\ell=6$, $p=2$). As predicted by the theory, the numbers are uniformly bounded for the whole range of considered values of $\alpha$ and $\kappa$. As one would expect from the theory, similar results are obtained for the 2--by--2 formulation, but with smaller iteration counts in general, see the last five columns in Table~\ref{table:1}.

\begin{table}[htb]
\centering
\newcolumntype{R}{>{\raggedleft\let\newline\\\arraybackslash\hspace{0pt}}m{.6cm}}
\begin{tabular}{l|RRRRR|RRRRR}
\hline
 & \multicolumn{5}{c|}{3--by--3}
 & \multicolumn{5}{c}{2--by--2}\\
\hline
 & \multicolumn{5}{c|}{$\kappa$}
 & \multicolumn{5}{c}{$\kappa$}\\
   $ \alpha$
               &\hspace{-1em}$10^{1}$& \hspace{-1em}$10^{0}$& \hspace{-1em}$10^{-1}$& \hspace{-1em}$10^{-2}$& \hspace{-1em}$10^{-3}$
               &\hspace{-1em}$10^{1}$& \hspace{-1em}$10^{0}$& \hspace{-1em}$10^{-1}$& \hspace{-1em}$10^{-2}$& \hspace{-1em}$10^{-3}$ \\ \hline
   $10^{0}$    &  $38$        &  $40$        &  $38$        &  $32$        &  $31$
               &  $25$        &  $27$        &  $27$        &  $23$        &  $21$\\
   $10^{-1}$   &  $33$        &  $38$        &  $37$        &  $37$        &  $34$
               &  $23$        &  $25$        &  $25$        &  $19$        &  $19$\\
   $10^{-2}$   &  $31$        &  $34$        &  $42$        &  $43$        &  $43$
               &  $21$        &  $23$        &  $23$        &  $17$        &  $17$\\
   $10^{-3}$   &  $26$        &  $39$        &  $46$        &  $48$        &  $48$
               &  $19$        &  $21$        &  $19$        &  $15$        &  $15$\\
   $10^{-4}$   &  $27$        &  $42$        &  $53$        &  $55$        &  $48$
               &  $15$        &  $17$        &  $17$        &  $13$        &  $11$\\
   $10^{-5}$   &  $34$        &  $45$        &  $54$        &  $55$        &  $45$
               &  $13$        &  $13$        &  $13$        &  $11$        &  $7$\\
   $10^{-6}$   &  $41$        &  $50$        &  $57$        &  $53$        &  $30$
               &  $17$        &  $13$        &  $17$        &  $11$        &  $27$\\
\hline
\end{tabular}
\caption{\label{table:1} Number of minres iterations; $\ell=6$; $p=2$.}
\end{table}

\begin{table}[htb]
\centering
\newcolumntype{R}{>{\raggedleft\let\newline\\\arraybackslash\hspace{0pt}}m{.5cm}}
\begin{tabular}{l|RRRRR|RRRRR}
\hline
 & \multicolumn{5}{c|}{3--by--3}
 & \multicolumn{5}{c}{2--by--2}\\
\hline
 & \multicolumn{5}{c|}{$\ell$}
 & \multicolumn{5}{c}{$\ell$}\\
   $p$         &  $2$         &  $3$         &  $4$         &  $5$         &  $6$
               &  $2$         &  $3$         &  $4$         &  $5$         &  $6$  \\ \hline
   $2$         &  $33$        &  $40$        &  $47$        &  $50$        &  $48$
               &  $8$         &  $9$         &  $13$        &  $15$        &  $15$  \\
   $3$         &  $37$        &  $44$        &  $49$        &  $50$        &  $49$
               &  $8$         &  $10$        &  $13$        &  $15$        &  $15$\\
   $4$         &  $41$        &  $44$        &  $50$        &  $50$        &  $49$
               &  $10$        &  $11$        &  $13$        &  $15$        &  $17$\\
   $5$         &  $43$        &  $45$        &  $50$        &  $50$        &  $49$
               &  $9$         &  $11$        &  $13$        &  $15$        &  $15$\\
\hline
\end{tabular}
\caption{\label{table:2} Number of minres iterations; $\kappa=10^{-2}$; $\alpha=10^{-3}$.}
\end{table}

\begin{table}[htb]
\centering
\newcolumntype{R}{>{\raggedleft\let\newline\\\arraybackslash\hspace{0pt}}m{.78cm}}
\begin{tabular}{l|RRR|RRR|RRR}
\hline
 & \multicolumn{3}{c|}{partial obs.; $U_h$}
 & \multicolumn{3}{c|}{full obs.; $U_h$}
 & \multicolumn{3}{c}{partial obs.; $\widetilde U_h$\Strut}\\
\hline
 & \multicolumn{3}{c|}{$\kappa$}
 & \multicolumn{3}{c|}{$\kappa$}
 & \multicolumn{3}{c}{$\kappa$}\\
   $ \alpha$
               &  $10^1$      &  $10^{-1}$   &  $10^{-3}$
               &  $10^1$       &  $10^{-1}$   &  $10^{-3}$
               &  $10^1$       &  $10^{-1}$   &  $10^{-3}$\\ \hline
   $10^{ 0}$   &  $1.99$      &  $1.99$      &  $1.95$
               &  $1.99$      &  $1.99$      &  $1.95$
               &  $7.78$      &  \hspace{-1em}$370.46$    &  \hspace{-1em}$350.08$\\
   $10^{-1}$   &  $1.99$      &  $1.98$      &  $1.92$
               &  $1.99$      &  $1.98$      &  $1.92$
               &  $7.78$      &  \hspace{-1em}$355.58$    &  \hspace{-1em}$332.98$\\
   $10^{-2}$   &  $1.98$      &  $1.97$      &  $1.90$
               &  $1.98$      &  $1.97$      &  $1.90$
               &  $7.76$      &  \hspace{-1em}$321.83$    &  \hspace{-1em}$299.80$\\
   $10^{-3}$   &  $1.96$      &  $1.94$      &  $1.81$
               &  $1.96$      &  $1.94$      &  $1.81$
               &  $7.59$      &  \hspace{-1em}$239.94$    &  \hspace{-1em}$224.48$\\
   $10^{-4}$   &  $1.83$      &  $1.81$      &  $1.48$
               &  $1.83$      &  $1.81$      &  $1.48$
               &  $6.88$      &  \hspace{-1em}$104.75$    &  \hspace{-1em}$82.72$\\
   $10^{-5}$   &  $1.52$      &  $1.50$      &  $1.14$
               &  $1.52$      &  $1.50$      &  $1.14$
               &  $5.60$      &  \hspace{-1em}$57.62$     &  \hspace{-1em}$44.61$\\
   $10^{-6}$   &  $1.20$      &  $1.19$      &  $1.02$
               &  $1.20$      &  $1.19$      &  $1.02$
               &  $4.37$      &  \hspace{-1em}$40.50$     &  \hspace{-1em}$35.68$\\
\hline
\end{tabular}
\caption{\label{table:3} Estimated condition number; $\ell=6$; $p=2$}
\end{table}

\begin{table}[htb]
\centering
\newcolumntype{R}{>{\raggedleft\let\newline\\\arraybackslash\hspace{0pt}}m{.85cm}}
\begin{tabular}{l|RRR|RRR|RRR}
\hline
 & \multicolumn{3}{c|}{partial observation; $U_h$}
 & \multicolumn{3}{c|}{full observation; $U_h$}
 & \multicolumn{3}{c}{partial observation; $\widetilde U_h$\Strut}\\
\hline
 & \multicolumn{3}{c|}{$\ell$}
 & \multicolumn{3}{c|}{$\ell$}
 & \multicolumn{3}{c}{$\ell$}\\
   $ p$
               &  $2$         &  $4$         &  $6$
               &  $2$         &  $4$         &  $6$
               &  $2$         &  $4$         &  $6$ \\ \hline
   $2$         &  $1.04$      &  $1.66$      &  $1.90$
               &  $1.04$      &  $1.66$      &  $1.90$
               &  $1.41$      &  $15.82$     &\hspace{-1em}$231.32$\\
   $3$         &  $1.09$      &  $1.74$      &  $1.96$
               &  $1.09$      &  $1.74$      &  $1.96$
               &  $1.27$      &  $8.49$      &\hspace{-1em}$141.61$\\
   $4$         &  $1.17$      &  $1.71$      &  $1.98$
               &  $1.17$      &  $1.71$      &  $1.98$
               &  $1.27$      &  $6.36$      &  $94.50$\\
   $5$         &  $1.27$      &  $1.79$      &  $1.99$
               &  $1.27$      &  $1.79$      &  $1.99$
               &  $1.34$      &  $4.68$      &  $66.78$\\
\hline
\end{tabular}
\caption{\label{table:4} Estimated condition number; $\kappa=10^{-2}$; $\alpha=10^{-3}$}
\end{table}

In Table~\ref{table:2}, we present the convergence behaviour with respect to the grid size $h_x\approx h_t \approx 2^{-\ell}$ and the spline degree $p$. Also here, the iteration counts are uniformly bounded, as predicted by the convergence theory.

The main factor for the convergence behavior is the approximation of the Schur complement
$
    S_h:=M_{q_t,h} + \alpha^{-1} L_h^\top M_h^{-1} L_h
$
by means of the preconditioner $P_h$. The quality of this approximation can be measured by the relative condition number $\kappa(P_h^{-1} S_h)$, which we have estimated utilizing a conjugate gradient solver. We present the corresponding results in Tables~\ref{table:3} and \ref{table:4}. In the first three columns, we present the results for the proposed method and the model problem as stated above. We can observe that the condition numbers are well bounded; specifically, the condition number is smaller than $2$ in all cases.

For comparison, we present the condition numbers obtained for full observation in the following three columns. One can observe that the first two digits for condition numbers obtained for limited and full observation are equal. This means that the preconditioner works equally well for the case of full observation and the case of partial observation.

Finally, we investigate what happens if we choose a smaller space for the control and the adjoined state. Specifically, we choose
\begin{equation}\nonumber
    \widetilde U_h
    := \big\{
    u_h \,:\,
    u_h \circ \mathbf G \in
    S_{p,p-1,Z_t} \otimes S_{p,p-1,\mathbf Z_x}
    \big\}
\end{equation}
where $\widetilde U_h$ is basically the same space as $Y_h$ (just without the initial and boundary conditions).
We present the corresponding results in the last three columns of Tables~\ref{table:3} and \ref{table:4}. We observe that this choice leads to much larger condition numbers, specifically if the grid gets refined. Correspondingly, this approach also leads to much larger iteration counts (which we do not present for brevity).

\begin{table}[htb]
\centering
\newcolumntype{R}{>{\raggedleft\let\newline\\\arraybackslash\hspace{0pt}}m{.85cm}}
\begin{tabular}{l|RRRR|RRRR}
\hline
 & \multicolumn{4}{c|}{Cholesky}
 & \multicolumn{4}{c}{multigrid}\\
\hline
 & \multicolumn{4}{c|}{$\alpha$}
 & \multicolumn{4}{c}{$\alpha$}\\
    $\ell$  &$10^0$&$10^{-2}$&$10^{-4}$&$10^{-6}$& $10^0$&$10^{-2}$&$10^{-4}$&$10^{-6}$\\
    \hline
    $2$     &  $28$& $36$    & $29$    & $24$    & $40$  & $47$    & $50$    & $46$ \\
    $3$     &  $29$& $41$    & $37$    & $29$    & $46$  & $48$    & $62$    & $63$ \\
    $4$     &  $30$& $44$    & $48$    & $28$    & $46$  & $52$    & $59$    & $64$ \\
    $5$     &  $30$& $43$    & $52$    & $39$    & $65$  & $57$    & $65$    & $75$ \\
\hline
\end{tabular}
\caption{\label{table:5} Iteration counts for the 3--by--3 formulation; $\kappa=10^{-2}$, $p=2$}
\end{table}

Finally, we want to illustrate what happens if the biharmonic problems~\eqref{eq:Biharmonic} in space are solved by means of iterative solvers. Here, we apply one geometric multigrid W-cycle in order to approximate the solution to~\eqref{eq:Biharmonic}. As a smoother, we have chosen the Gauss-Seidel relaxation, which is known to degrade if the spline degree is increased, cf.~\cite{SognTakacsBiharmonic2}. So, we restrict ourselves to $p=2$, and choose $\kappa=10^{-2}$. We perform $2$ pre- and $2$ post-smoothing steps. In the right four columns, we give the iteration counts for the overall minres solver used to solve the 3--by--3 formulation of the optimality system. As comparization, we present the results obtained with a direct solver (based on a Cholesky factorization) in the first four columns. We observe that replacing the direct solver by the application of a W-cycle increases the iteration counts slightly, like by a factor of two. For large problems, particularly in three spacial dimensions, this might be easily outweighed by the fact that a multigrid W-cycle is usually much cheaper than a full Cholesky factorization, both in terms of computational costs and memory consumption. In order to reduce communication effort in a parallel-in-time realization of the whole preconditioner, one could consider to apply of several multigrid W-cycles, in order to realize the preconditioner.

\section{Conclusions}
\label{sec:conclusions}

In this paper, we have presented a robust (with respect to the model parameters $\alpha,\kappa$, the grid sizes $h_x,h_t$ and the spline degree $p$) preconditioner for optimal control problems of tracking type with parabolic state equation. We have given the analysis both for the continuous and the discrete case. The theory states that it is sufficient to choose $U_h$ such that $(\partial_t-\kappa\Delta) \widehat Y_h\subseteq  \widehat U_h$ in order to get a robust preconditioner. This means that $U_h$ has to be much larger than $Y_h$. The numerical experiments suggest that this is also necessary; specifically, it its not sufficient if $U_h$ and $Y_h$ are chosen to be (up to the boundary conditions) the same.

In order to computationally realize the proposed preconditioner, one has to solve an elliptic problem in space and time which is of second order in time and of fourth order in space. Using fast diagonalization, this can be transformed into a sequence of fourth order elliptic problems in space only. Each of them can be solved in parallel. While, usually, fast diagonalization is only constructed to approximate the corresponding operator, we use fast diagonalization in time only and use the fact that one has a perfect tensor-product for many practical applications. So, the fast diagonalization approach does not only approximate the full operator, but it exactly represents it.
For solving the problems in space, apart from direct solvers, any iterative solver can be used; we have illustrated this by using a multigrid solver.

\section*{Appendix}

In this appendix we prove Theorem~\ref{thrm:distortion:error}. First, we recall that B-splines can be bounded from below with the Euclidean norm of the coefficient vector. Despite that the involved constant degrades exponentially with the spline degree, the dependence with respect to the grid size $h$ is as expected.
\begin{theorem}\label{thrm:scherer}
        Let $V_h:=S_{p,k,\mathbf Z}(\widehat \Omega)$ for some $p>k\ge 0$ and some $\mathbf Z$. Let $(\phi_\ell)_{\ell=1}^L$ be the corresponding B-spline basis (as obtained by the Cox-de Boor formula). Then, there is a constant $c>0$ only depending on $p$, $k$ and $h_{\mathbf Z}/h_{\mathbf Z,\min}$ such that
        $
                \sum_{\ell=1}^L v_\ell^2
                \le c h_{\mathbf Z}^{-d}
                \left\| \sum_{\ell=1}^L v_\ell \phi_\ell \right\|_{L^2(\widehat\Omega)}^2
        $.
\end{theorem}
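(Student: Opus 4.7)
The plan is first to reduce to the univariate case via the tensor-product structure of the basis, and then to establish the univariate estimate by a scaling and compactness argument on a reference knot span.

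For the tensor reduction, a multivariate B-spline on $\widehat\Omega=(0,1)^d$ factors as $\phi_{(\ell_1,\ldots,\ell_d)}(x_1,\ldots,x_d)=\prod_{j=1}^d \phi^{(j)}_{\ell_j}(x_j)$. Writing the spline as a sum in the first variable whose coefficients are themselves splines in the remaining variables, Fubini's theorem reduces the claim to one application of the univariate estimate per coordinate direction. Each such application produces a factor $h_{\mathbf Z}^{-1}$, and after $d$ iterations one obtains the stated $h_{\mathbf Z}^{-d}$.

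For the univariate case, let $Z=(z_0,\ldots,z_N)$ have largest span $h$ and smallest span $h_{\min}$. On each knot span $[z_i,z_{i+1}]$ of width $h_i$, at most $p+1$ B-splines are nonzero. Pulling this span back to the reference interval $[0,1]$ via $\xi\mapsto z_i+h_i\xi$, these restricted B-splines become polynomials of degree $p$ whose shape depends only on the local knot configuration --- that is, the positions of the $2p$ surrounding knots rescaled by $h_i^{-1}$, together with the knot multiplicity pattern determined by $k$. Under the bounded mesh-ratio hypothesis, this configuration varies over a compact subset of Euclidean space; the linear map sending B-spline coefficients to the resulting polynomial on $[0,1]$ is injective by the local linear independence of B-splines, and its smallest singular value depends continuously on the configuration, hence is bounded below by a constant $c_0>0$ depending only on $p$, $k$ and $h/h_{\min}$. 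Transporting this back to $[z_i,z_{i+1}]$ via the $\sqrt{h_i}$ scaling of the $L^2$-norm yields
\[
\sum_{\ell\,:\,\phi_\ell\not\equiv 0\,\text{on}\,[z_i,z_{i+1}]} v_\ell^2 \le c_0\,h_i^{-1}\left\|\sum_\ell v_\ell \phi_\ell\right\|_{L^2([z_i,z_{i+1}])}^2.
\]
Summing over the knot spans (each coefficient appears in at most $p+1$ such local sums) and using $h_i^{-1}\le (h/h_{\min})\,h^{-1}$ to absorb the mesh ratio into the constant completes the univariate proof.

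The main obstacle is justifying the compactness step: one must verify that the admissible local knot configurations genuinely form a compact set parameterized by $p$, $k$ and $h/h_{\min}$, and that the map from coefficients to the restricted polynomial depends continuously on this configuration, including the limiting cases in which adjacent knots coalesce (reflecting the reduced smoothness class $k$). Both properties follow from the Cox--de Boor recursion and the classical local linear independence of B-splines, but they require some care to set up. Once in place, the remainder of the argument is bookkeeping with scaling factors; the exponential dependence on $p$ mentioned in the remark preceding the theorem is simply the worst-case behaviour of $c_0^{-1}$ as $p$ grows, which does not enter the statement quantitatively.
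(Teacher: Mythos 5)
Your proposal is correct, but it takes a genuinely different route from the paper. The paper's proof is a two--line citation argument: it invokes the Scherer--Shadrin bound on the condition number of the B-spline basis (which says that the product $\sup_v \tfrac{\sum v_\ell^2}{\|\sum v_\ell\phi_\ell\|^2}\cdot\sup_w\tfrac{\|\sum w_\ell\phi_\ell\|^2}{\sum w_\ell^2}$ is $\mathcal O(p^d2^{dp})$ uniformly in the knots), then picks the specific vector $w_1=\cdots=w_L=1$, so that by partition of unity the second factor is bounded below by $1/L$ and hence the first factor is bounded above by $cL\lesssim h_{\mathbf Z}^{-d}$. Your argument is instead a self-contained scaling-and-compactness proof: reduce to $d=1$ by tensor structure and Fubini, then on each knot span pull back to $[0,1]$, use local linear independence to get injectivity, and use the bounded-mesh-ratio hypothesis to confine the rescaled local knot configurations to a compact set on which the smallest singular value is continuous and hence uniformly positive. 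Both proofs are sound. What the paper's approach buys is brevity and an explicit quantitative dependence on $p$ (the $\mathcal O(p2^p)$ growth of the constant, mentioned in the remark preceding the statement), which your compactness argument cannot produce since it only yields existence of a positive lower bound, not its rate. What your approach buys is independence from the fairly deep Scherer--Shadrin result; it is entirely elementary once the compactness set-up (continuity of the coefficient-to-polynomial map under knot perturbations, handling of clamped boundary knot patterns, and the fixed interior knot multiplicity $p-k$) is spelled out. Two small points worth tightening in your write-up: (i) the factor in your local estimate should be $c_0^{-2}$ rather than $c_0$ if $c_0$ denotes the smallest singular value, a notational slip rather than an error; (ii) the ``coalescing'' remark is slightly misleading, since in $S_{p,k,\mathbf Z}$ with fixed $k$ the interior knot multiplicity $p-k$ is constant across the parameter family, so you never actually take a limit where knots merge --- the multiplicity pattern is built into the compact parameter set from the start.
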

\begin{proof}
        This result follows directly from \cite[Theorem~1]{SchererSchadrin1999},
        where it was shown that the condition number of the B-spline basis does not grow faster than $p 2^p$:
        \[
             \sup_{v_1,\ldots,v_L}
                \frac{ \sum_{\ell=1}^L v_\ell^2 }
                        { \left\| \sum_{\ell=1}^L v_\ell \phi_\ell \right\|_{L^2(\widehat\Omega)}^2 }
            \;
             \sup_{w_1,\ldots,w_L}
                \frac{ \left\| \sum_{\ell=1}^L w_\ell \phi_\ell \right\|_{L^2(\widehat\Omega)}^2 }
                        { \sum_{\ell=1}^L w_\ell^2 }
            \le c = \mathcal O(p^d 2^{dp}).
        \]
        A result of the desired kind is obtained for the choice $w_1=\cdots=w_L=1$. The extension to tensor-product splines is straight-forward. 
\end{proof}

Now, we can show the desired result.

\begin{proofof}{of Theorem~\ref{thrm:distortion:error}}
    Let $(\phi_\ell)_{\ell=1}^L$ be the tenor-product B-spline basis of $V_h$. We know that the supports of tensor-product B-splines are axes-parallel boxes with bounded support ($|\supp \phi_\ell| \le c_1 h_{\mathbf Z}^d$) and bounded overlap ($\{k: \supp \phi_\ell \cap \supp \phi_k \ne \emptyset\} \le c_2$) for some constants $c_1$ and $c_2$ only depending on $p$. Moreover, we know that the basis functions are non-negative and form a partition of unity, thus $\phi_\ell \in [0,1]$. We use the basis to write
    $w_h = \sum_{\ell=1}^{L} w_\ell \phi_\ell$
    and
    $v_h = \sum_{\ell=1}^{L} v_\ell \phi_\ell$.
    For the choice
    $v_\ell := \overline{\rho}_\ell w_\ell$
    with
    $\overline \rho_\ell := \tfrac{1}{ |\supp \phi_\ell| } (\rho,1)_{L^2(\supp \phi_\ell)}$,
    we have
    \begin{align*}
            & \|\rho w_h - v_h\|_{L^2(\widehat \Omega)}^2
             =
            \left\|\sum_{\ell=1}^{L} ( \rho w_\ell - v_\ell) \phi_\ell \right\|_{L^2(\widehat \Omega)}^2
             \le (1+c_2^2) \sum_{\ell=1}^{L}
            \|  (\rho w_\ell - v_\ell) \phi_\ell \|_{L^2(\widehat \Omega)}^2 \\
            & \qquad \le (1+c_2^2) \sum_{\ell=1}^{L}
            \|  \rho w_\ell - v_\ell \|_{L^2(\supp \phi_\ell)}^2
            = (1+c_2^2) \sum_{\ell=1}^{L}
            |w_\ell|^2 \|  \rho - \overline \rho_\ell \|_{L^2(\supp \phi_\ell)}^2.
    \end{align*}
    Using a Poincar\'e inequality (cf. \cite{payne1960optimal}) with constant $c_3>0$, we obtain
    \begin{align*}
        \|\rho w_h - v_h\|_{L^2(\widehat \Omega)}^2
        & \le (1+c_2^2) c_3 h_{\mathbf Z}^2
        \sum_{\ell=1}^{L} |w_\ell|^2
        | \rho |_{H^1(\supp \phi_\ell)}^2
         \\&  \le c_1(1+c_2^2) c_3 h_{\mathbf Z}^{2+d}
        | \rho |_{W^1_\infty(\widehat \Omega)}^2
        \sum_{\ell=1}^{L} |w_\ell|^2
        .
    \end{align*}
    Theorem~\ref{thrm:scherer}  finishes the proof.
\end{proofof}

\bibliographystyle{abbrv}
\bibliography{bibliography}

\end{document}